\numberwithin{equation}{section}
\newtheorem{theorem}{Theorem}[section]
\newtheorem{corollary}[theorem]{Corollary}
\newtheorem{lemma}[theorem]{Lemma}
\newtheorem{proposition}[theorem]{Proposition}
\newtheorem{conjecture}[theorem]{Conjecture}
\theoremstyle{definition}
\newcommand{\C}{\mathbb{C}}
\newcommand{\R}{\mathbb R}
\newcommand{\Q}{\mathbb Q}
\newcommand{\Z}{\mathbb Z}
\newcommand{\p}{\mathbb P}
\newcommand{\xto}{\xrightarrow}
\newcommand{\action}{\curvearrowright}
\def\url#1{\noindent \sf{#1}}
\begin{document}

\title{\bf Homology supported in Lagrangian submanifolds in mirror quintic threefolds}

\author{Daniel L\'opez Garcia}

\date{}

\maketitle

\begin{abstract}
In this note, we study homology classes in the mirror quintic Calabi-Yau threefold which   can be realized by  special Lagrangian submanifolds. We have used Picard-Lefschetz  theory to establish the monodromy action  and to study the orbit of Lagrangian vanishing cycles. For many prime numbers $p$ we can compute the orbit modulo $p$.  We conjecture that the orbit in  homology with coefficients in $\mathbb{Z}$ can be determined by these orbits with coefficients in $\mathbb{Z}_p$.
\end{abstract}



\section{Introduction}
Given a symplectic manifold $(X, \omega)$ of dimension $2n$ there are homology classes in $H_n(X, \Q)$ which may be represented by Lagrangian cycles.  In \cite{minarea}, the authors define Lagrangian cycles as cycles in a symplectic 4-manifold, whose two-simplices are given by $C^1$ Lagrangian maps and a \textit{Lagrangian homology class} is a homology class which can be represented by a Lagrangian cycle. In that article, they  show a characterization of the  Lagrangian homology classes in terms of the minimizers of an area functional. Moreover, they show for a compact K\"alher 4-manifold $(X, \omega, J)$ and a homology class $\alpha\in H_2(X,\Z)$, that $\alpha$ is a Lagrangian homology class if and only if $[\omega](\alpha)=0$. If the Chern class $c_1(X)$ also annihilates $\alpha$, then $\alpha$ can be represented by an immersed Lagrangian surface (not necessarily embedded). 

The question about which part of the homology is supported in Lagrangian submanifolds, can be refined a little more if we look for Lagrangian spheres. In \cite{langrangianspheres} for a $(X, \omega, J)$ K\"alher 4-manifold with Kodaira dimension $-\infty$, i.e. for rational or ruled surfaces it is shown that the class $\alpha\in H_2(X,\Z)$ is represented by a Lagrangian sphere if and only if $[\omega](\alpha)=0$, $c_1(X)(\alpha)=0$, $\alpha^2=-2$ and  $\alpha$ is represented by a smooth sphere. For 4-manifolds, the dimension of the 2-cycles  allows us to relate the property of being represented by Lagrangian cycles with the vanishing of the \textit{periods} $\int_{\alpha} \omega$ and $\int_{\alpha}c_1(X)$. For higher dimension manifolds this pairing is not well-defined, hence we do not have a natural generalization of the previous results. Despite of this, it is possible to show that in any regular hypersurface of $\p^n$ with $n$ even, all $(n-1)$-cycles can be written as a linear combination of cycles supported in Lagrangian spheres, see Proposition \ref{main1}.

A more interesting question for $n=4$, is to ask not only which homology classes  are generated by Lagrangian spheres but which ones are supported in Lagrangian spheres. In this article we consider a family $\tilde{X}_\varphi$ of mirror quintic Calabi-Yau threefolds and study some classes in $H_3(\tilde{X}_\varphi,\Z)$ which are supported in Lagrangian $3$-spheres and Lagrangian $3$-tori. This family is constructed as follows. Consider the Dwork family $X_{\varphi}$ in $\p^4$ given by the locus of the polynomial  
$$p_{\varphi}:=\varphi z_0^5+z_1^5+z_2^5+z_3^5+z_4^5-5 z_0z_1z_2z_3z_4=0,$$
with critical values  in $\varphi=0,1, \infty$. For every $\varphi\neq 0,1,\infty$, $\tilde{X}_\varphi$ is obtained as a desingularization of the quotient of $X_\varphi$ by the action of a finite group, see \S \ref{monodromyonMQT} and \cite{candelasdelaossa, PFmonodromy, doranmorgan}. The rank of the free group $H_3(\tilde X_{\varphi}, \Z)$ is four and hence it is isomorphic to $\Z^4$ after choosing a basis. In this basis the homology class $\delta_2=(0 \text{ }1\text{ }0\text{ }0)$ is represented by a torus  associated to the singularity of $X_\varphi$ when $\varphi \to 0$ and the class $\delta_4=(0\text{ }0\text{ }0\text{ }1)$ is represented by a sphere $S^3$ associated to the singularity of $X_\varphi$ when $\varphi\to 1$.  As in \cite{candelasdelaossa} we give an explicit description of these two cycles in \S \ref{lagrangianST}, and furthermore we show that these cycles are Lagrangian submanifolds of $\tilde X_{\varphi}$.

The monodromy action of the family is given by symplectomorphisms at each regular fiber. It is possible to determine two matrices $M_0$ and $M_1$ such that the monodromy action over $H_3(\tilde{X}_\varphi,\Z)$ corresponds (with respect to the basis mentioned above) to the free subgroup of $Sp(4,\Z)$ generated by $M_0$ and $M_1$, see \S \ref{monodromyonMQT}.
Therefore, the orbit of $\delta_2$ and $\delta_4$ by the action of $M_0*M_1$ are homology classes which can be represented by   Lagrangian submanifolds. Our main result is about $H_3(\tilde X_{\varphi}, \Z_p)$, where $\Z_p=\Z/ p\Z$ for some primes $p$, and it is summarized in the following theorem.
\begin{theorem}
	\label{main2}
	For the mirror quintic Calabi-Yau threefold  $\tilde X:=\tilde X_{\varphi}$ with $\varphi\neq 0,1,\infty$, the homology classes
	\begin{align}
	    \label{cycle2torus}
	(0\text{ }0\text{ }1\text{ }1),\text{ } 	
	(0\text{ }1\text{ }0\text{ } 0),\text{ }
	(0\text{ }1\text{ }0\text{ }1),\text{ }
	(1\text{ }     0\text{ }     0\text{ }     1),\text{ }
	(1\text{ }0\text{ }     1\text{ }     1) & \in H_3(\tilde X, \Z_2)
	\end{align}
	\begin{align}
	\label{cycle5torus}
	(0\text{ }1\text{ }0\text{ }0),\text{}
	(0\text{ }1\text{ }0\text{ }1),\text{}
	(0\text{ }1\text{ }0\text{ }2),\text{}
	(0\text{ }1\text{ }0\text{ }3),\text{}
	(0\text{ }1\text{ }0\text{ }4),\text{}
	\nonumber\\
	(0\text{ }1\text{ }1\text{ }0),\text{}
	(0\text{ }1\text{ }1\text{ }1),\text{}
	(0\text{ }1\text{ }1\text{ }2),\text{}
	(0\text{ }1\text{ }1\text{ }3),\text{}
	(0\text{ }1\text{ }1\text{ }4),\text{}
	\nonumber\\
	(0\text{ }1\text{ }2\text{ }0),\text{}
	(0\text{ }1\text{ }2\text{ }1),\text{}
	(0\text{ }1\text{ }2\text{ }2),\text{}
	(0\text{ }1\text{ }2\text{ }3),\text{}
	(0\text{ }1\text{ }2\text{ }4),\text{}
	\nonumber\\
	(0\text{ }1\text{ }3\text{ }0),\text{}
	(0\text{ }1\text{ }3\text{ }1),\text{}
	(0\text{ }1\text{ }3\text{ }2),\text{}
	(0\text{ }1\text{ }3\text{ }3),\text{}
	(0\text{ }1\text{ }3\text{ }4),\text{}
	\nonumber\\
	(0\text{ }1\text{ }4\text{ }0),\text{}
	(0\text{ }1\text{ }4\text{ }1),\text{}
	(0\text{ }1\text{ }4\text{ }2),\text{}
	(0\text{ }1\text{ }4\text{ }3),\text{}
	(0\text{ }1\text{ }4\text{ }4) \text{}
	& \in H_{3}(\tilde X, \Z_5)
	\end{align} are represented by Lagrangian 3-tori. 	The homology classes
    \begin{align}
	\label{cycle2sphere}	(0\text{ }0\text{ }0\text{ }1),\text{ } 
	(0\text{ }0\text{ }1\text{ }0),\text{} 	
	(0\text{ }1\text{ }1\text{ }0),\text{}
	(0\text{ }1\text{ }1\text{ }1),\text{}
	(1\text{ }0\text{ }0\text{ }0),\text{}
	\nonumber\\
	(1\text{ }0\text{ }1\text{ }0),\text{}
	(1\text{ }1\text{ }0\text{ }0),\text{}
	(1\text{ }1\text{ }0\text{ }1),\text{}
	(1\text{ }1\text{ }1\text{ }0),\text{}
	(1\text{ }1\text{ }1\text{ }1)  &\in H_3(\tilde X, \Z_2)
	\end{align}
	\begin{equation}
	\label{cycle5sphere}
	(0 \text{ }0\text{ }0\text{ }1),\text{ }
	(0 \text{ }0\text{ }1\text{ }1),\text{ }
	(0 \text{ }0\text{ }2\text{ }1),\text{ }
	(0 \text{ }0\text{ }3\text{ }1),\text{ }
	(0 \text{ }0\text{ }4\text{ }1)\text{ } \in H_3(\tilde X, \Z_5)
	\end{equation}
	are represented by Lagrangian 3-spheres. 	For $p=3,7,11,13,17,19,23$, any homology class in $H_3(\tilde X, \Z_p)$ different from $(0\text{ }0\text{ }0\text{ }0)$ can be represented by Lagrangian 3-tori and by  Lagrangian 3-spheres.
\end{theorem}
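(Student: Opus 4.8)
The plan is to reduce the theorem to an explicit, finite orbit computation inside the groups $Sp(4,\Z_p)$. First I would recall from \S\ref{monodromyonMQT} the two symplectic matrices $M_0, M_1 \in Sp(4,\Z)$ generating the monodromy group, together with the identifications $\delta_2 = (0\ 1\ 0\ 0)$ and $\delta_4 = (0\ 0\ 0\ 1)$, and the fact (established in \S\ref{lagrangianST}) that $\delta_2$ is represented by a Lagrangian $3$-torus and $\delta_4$ by a Lagrangian $3$-sphere. Since the monodromy acts by symplectomorphisms, every element of the orbit $(\langle M_0, M_1\rangle)\cdot \delta_2$ is again represented by a Lagrangian $3$-torus, and likewise $(\langle M_0, M_1\rangle)\cdot \delta_4$ by a Lagrangian $3$-sphere. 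Reducing the entries modulo $p$ gives a well-defined action of the image $\bar G_p$ of $\langle M_0,M_1\rangle$ in $Sp(4,\Z_p)$ on $H_3(\tilde X,\Z_p)\cong \Z_p^4$, and the mod-$p$ reduction of a Lagrangian $3$-torus (resp. $3$-sphere) class is still represented by that same submanifold. So it suffices to prove that the listed vectors are exactly (a subset of) the $\bar G_p$-orbits of $\bar\delta_2$ and $\bar\delta_4$.

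Next I would carry out the orbit computation. For each $p \in \{2,3,5,7,11,13,17,19,23\}$ the set $H_3(\tilde X,\Z_p)$ is finite of size $p^4$, so the orbit of any vector under the finitely generated group $\bar G_p$ is computed by a terminating breadth-first search: start with $\{\bar\delta_2\}$ (resp. $\{\bar\delta_4\}$), repeatedly apply $M_0^{\pm 1}$ and $M_1^{\pm 1}$, and collect new vectors until the set stabilizes. This produces the orbit exactly. For $p=2$ one checks the orbit of $\bar\delta_2$ is the $5$-element set in \eqref{cycle2torus} and the orbit of $\bar\delta_4$ is the $10$-element set in \eqref{cycle2sphere}; for $p=5$ one gets the $25$-element set \eqref{cycle5torus} for the torus and the $5$-element set \eqref{cycle5sphere} for the sphere; and for $p=3,7,11,13,17,19,23$ one checks that both orbits fill all of $H_3(\tilde X,\Z_p)\setminus\{0\}$ (equivalently, that $\bar G_p$ acts transitively on nonzero vectors, which is automatic once $\bar G_p = Sp(4,\Z_p)$ and one recalls that $Sp(2n,\Z_p)$ acts transitively on $\Z_p^{2n}\setminus\{0\}$ for $p$ prime). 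Since each such computation involves only matrices over a finite ring, it can be presented as a finite table or verified by an explicit algorithm; I would include the key intermediate vectors rather than every multiplication.

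The main obstacle is not the orbit search itself but establishing the underlying input: namely, pinning down $M_0$ and $M_1$ precisely (with the correct basis of $H_3(\tilde X_\varphi,\Z)$ adapted to Picard--Lefschetz theory) and verifying that $\delta_2$ and $\delta_4$ are genuinely represented by embedded Lagrangian submanifolds of $\tilde X_\varphi$ — a $3$-torus and a $3$-sphere respectively. This is exactly the content prepared in \S\ref{monodromyonMQT} and \S\ref{lagrangianST}, following \cite{candelasdelaossa}, and I would invoke those results here; the delicate point to double-check is that desingularization of the finite-group quotient does not destroy the Lagrangian property of the vanishing cycles, and that the monodromy matrices are the correct $Sp(4,\Z)$ representatives (not merely conjugate ones), since the specific coordinates of the listed orbit vectors depend on that choice. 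A secondary point worth a remark is stability: because reduction mod $p$ is a group homomorphism $Sp(4,\Z)\to Sp(4,\Z_p)$, the $\Z_p$-orbit is the image of the $\Z$-orbit, so these finite computations give genuine lower bounds for the (a priori infinite) $\Z$-orbit, which is the motivation for the conjecture stated in the abstract.
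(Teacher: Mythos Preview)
Your proposal is correct and follows essentially the same route as the paper: reduce everything to the already-established facts that $\delta_2,\delta_4$ are Lagrangian and that monodromy acts by symplectomorphisms, then compute the finite orbits of $\bar\delta_2,\bar\delta_4$ under $\langle M_0,M_1\rangle$ in $(\Z/p\Z)^4$ by exhaustive search. The only notable implementation difference is that the paper first proves the small lemma $M_0^{p}\equiv M_1^{p}\equiv \mathrm{Id}\pmod p$ (with exponent $4$ and $9$ for $M_0$ at $p=2,3$), which bounds the word length and lets the search run over products $M_1^jM_0^i$ with $0\le i,j\le p$ rather than a generic breadth-first search on $M_i^{\pm1}$; and the paper does not attempt to identify $\bar G_p$ with $Sp(4,\Z_p)$ for the ``complete'' primes but simply verifies transitivity on nonzero vectors by direct enumeration.
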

In general for a manifold $M$, a class $\delta\in H_k(M,\Z)$ is called \textit{primitive} if there is no $m\in \Z$ and $\delta '\in H_k(M, \Z) $ such that $\delta=m \delta '$.  We believe that for any prime different to $2$ and $5$, all classes in $H_3(\tilde X, \Z_p)$ different to $(0\text{ }0\text{ }0\text{ }0)$ can be represented by  Lagrangian 3-tori and by a Lagrangian 3-spheres. This is a consequence of the following conjecture.
\begin{conjecture}
	\label{conjecture1}
	Let  $\delta$ be a primitive class in $ H_3(\tilde X, \Z)$. If $mod_2(\delta)$ is a homology class in the list (\ref{cycle2torus})  and $mod_5(\delta)$ is a homology class in the list (\ref{cycle5torus}), then $\delta$  is represented by a  Lagrangian 3-torus. If $mod_2(\delta)$ is a homology class in the list (\ref{cycle2sphere}) and $mod_5(\delta)$ is a homology class in the list (\ref{cycle5sphere}), then $\delta$ is represented by a Lagrangian 3-sphere.
\end{conjecture}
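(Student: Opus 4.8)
The plan is to reduce the statement to a single arithmetic claim about the monodromy orbit, and then to attack that claim through strong approximation together with a separate analysis at the two exceptional primes $2$ and $5$. The geometric reduction is immediate from the construction in \S\ref{monodromyonMQT}--\S\ref{lagrangianST}: each generator of $G=\langle M_0,M_1\rangle\subseteq Sp(4,\Z)$ is realized by a symplectomorphism $\phi_M$ of the fiber $\tilde X_\varphi$, so if a class $c$ is represented by a Lagrangian submanifold $L$, then $M\cdot c$ is represented by the diffeomorphic Lagrangian $\phi_M(L)$. As $\delta_2$ is a Lagrangian $3$-torus class and $\delta_4$ a Lagrangian $3$-sphere class, the entire orbit $G\cdot\delta_2$ consists of Lagrangian-torus classes and $G\cdot\delta_4$ of Lagrangian-sphere classes. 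Hence it suffices to prove that every primitive $\delta$ with the prescribed reductions lies in the relevant orbit, i.e.\ $G\cdot\delta_2\supseteq\{\delta\ \text{primitive}: mod_2(\delta)\in(\ref{cycle2torus}),\ mod_5(\delta)\in(\ref{cycle5torus})\}$, and the analogous inclusion for $G\cdot\delta_4$.

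I would then record that, by Theorem \ref{main2}, the four lists are precisely the mod-$p$ orbits $\bar G_p\cdot\bar\delta_2$ and $\bar G_p\cdot\bar\delta_4$ for $p=2,5$, where $\bar G_p=G\bmod p$. That the reductions may be imposed independently at $2$ and $5$ amounts to the mod-$10$ orbit being the product of the mod-$2$ and mod-$5$ orbits; this is a finite computation on the explicit finite groups $\bar G_2,\bar G_5$ (equivalently, by Goursat, the assertion that they share no common quotient, with $Sp(4,\mathbb{F}_2)\cong S_6$ and $\bar G_5\le Sp(4,\mathbb{F}_5)$). The substantive step is a local--global principle for the integral orbit. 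Since $G$ is Zariski-dense in the simply connected group $Sp_4$, strong approximation (Matthews--Vaserstein--Weisfeiler) gives that the closure of $G$ in $Sp(4,\hat\Z)$ is open, so that $\bar G_p=Sp(4,\mathbb{F}_p)$ for all but finitely many $p$. The computation behind Theorem \ref{main2} shows that the orbit is all of $\mathbb{F}_p^4\setminus\{0\}$ for the tested primes $p=3,7,\dots,23$, while $2$ and $5$ are genuinely exceptional; one then expects --- and must verify --- that $\{2,5\}$ is the complete set of exceptional primes, so that the closure surjects onto $\prod_{p\neq 2,5}Sp(4,\Z_p)$. It remains to show (a) that the closure at $p=2,5$ is the full preimage of $\bar G_p$ under $Sp(4,\Z_p)\to Sp(4,\mathbb{F}_p)$, so that orbit membership is detected already mod $p$ rather than mod $p^2$, and (b) that the congruence description of the orbit closure descends to the \emph{integral} orbit, with no two distinct $G$-orbits sharing the same reductions.

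I expect (b) to be the decisive obstacle, and it is why the statement is phrased as a conjecture. The monodromy group $G$ is \emph{thin} --- of infinite index in $Sp(4,\Z)$, by Brav--Thomas --- so strong approximation controls the congruence completions of the orbit but not the orbit itself: a priori the thin orbit could omit infinitely many primitive classes that the mod-$2$ and mod-$5$ conditions allow, and no general theorem forces a thin-group orbit to fill out its congruence closure. Bridging this gap would require an arithmetic input beyond strong approximation, of affine-sieve/spectral-gap type in the spirit of Bourgain--Gamburd--Sarnak, upgraded from a positive-density statement to a genuine local--global equality. Short of the full conjecture, what seems provable by these methods is the weaker assertion that \emph{almost every} admissible primitive class is represented by a Lagrangian $3$-torus (resp.\ $3$-sphere), and I would present that as the strongest currently attainable approximation.
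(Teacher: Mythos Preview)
The statement is labelled a \emph{conjecture} in the paper and is not proved there; the paper offers only the observation that the map $H_3(\tilde X,\Z)\xto{mod_p} H_3(\tilde X,\Z_p)$ makes the orbit conditions \emph{necessary}, and explicitly declares the converse to be conjectural. So there is no ``paper's own proof'' to compare against.

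Your write-up is therefore not a proof either, and you say as much. The reduction you carry out --- monodromy acts by symplectomorphisms, $\delta_2$ and $\delta_4$ are Lagrangian, hence the integral orbits $G\cdot\delta_2$ and $G\cdot\delta_4$ consist of Lagrangian classes, and the lists in Theorem~\ref{main2} are exactly the mod-$2$ and mod-$5$ orbits --- is correct and is precisely the paper's framing. Your invocation of strong approximation for the Zariski-dense subgroup $G\subset Sp_4$ is appropriate and would indeed pin down the congruence closure of the orbit; the Brav--Thomas thinness result is the right reason why this does not finish the job. The genuine gap you name in (b) --- that a thin orbit need not fill its congruence closure, and that affine-sieve methods currently give density rather than exact local--global principles --- is the actual obstruction, and it is why the paper leaves the statement open. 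One minor point: step (a), that the $2$-adic and $5$-adic closures are the full preimages of $\bar G_2$ and $\bar G_5$, is itself not verified in the paper and would require additional computation (e.g.\ checking the orbits mod $4$ and mod $25$); the paper's evidence is purely mod $p$, not mod $p^k$.
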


We have analogous results for other 14 examples of Calabi-Yau threefolds which appear in Table \ref{14examplevalues}. However, in these cases we do not know if the vectors $\delta_2$ and $\delta_4$ have Lagrangian submanifolds associated as in the Dwork family case.\\

\textbf{Acknowledgements}. I thank Hossein Movasati  for suggesting to study monodromy action of quintic mirror and for helpful discussions.  Also a discussion with Emanuel Scheidegger about the Lagrangian torus was very useful. Furthermore, I would like to thank Roberto Villaflor for his comments and suggestions on a first version of the paper.
\section{Basics on Picard-Lefschetz theory}
We recall some facts about Lefschetz fibration in symplectic geometry. These results are in the literature, see for example \cite{ArouxLagrangian, Arouxlefschetz,seidel, RThomas}. We collect them here to set notations and for quick reference throughout the article.\\

Let $Y$ be a complex manifold. A  \textit{Lefschetz fibration} is a surjective analytic map $f:Y\rightarrow\mathbb{P}^1$ with a finite number of critical points, such that for any critical point $p$, there is a  chart with Morse coordinates. This means that there is a coordinate system around $p$ such that $f(z)=f(p)+z_1^2+\cdots+z_n^2$ for $z$ in a neighborhood of $p$.

Every projective manifold $Y\hookrightarrow \mathbb{P}^N$ has a natural symplectic form $\omega$ given by the pullback of the Fubini study form in $\mathbb{P}^N$. Since the fibers of $f$ over regular values are complex submanifolds of $Y$, the restriction of $\omega$ to each regular fiber remains symplectic. Furthermore, the regular fibers of the Lefschetz fibration $f:Y\rightarrow \mathbb{P}^1$ are symplectomorphic. This follows from the following symplectic version of the Ehresmann lemma. 
\begin{proposition}
	\label{symplecticEhresmann}
	Let $(E, \omega)$ be a symplectic manifold and $B$ be a connected manifold. Consider $f:E\rightarrow B$ a proper surjective map with a finite set of critical values $C$, such that $\omega$ is symplectic at every regular fiber of $f$. Then the regular fibers are symplectomorphic.
\end{proposition}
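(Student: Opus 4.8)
The plan is to adapt the classical Ehresmann fibration theorem by constructing a symplectic parallel transport and upgrading the resulting diffeomorphisms between fibers to symplectomorphisms. First I would work locally on the base: since $B$ is connected and $C$ is finite, it suffices to show that any two regular fibers $E_{b_0}$ and $E_{b_1}$ joined by a path avoiding $C$ are symplectomorphic, and by a standard chain argument it is enough to treat the case where $b_0,b_1$ lie in a single coordinate chart $U\subset B\setminus C$ diffeomorphic to a ball, over which $f$ is a proper submersion. Restricting to $E_U = f^{-1}(U)$, the goal becomes: show the fibers of the proper submersion $f\colon E_U\to U$ are all symplectomorphic.

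Next I would set up the horizontal distribution. For a regular value $b$ and $x\in E_b$, the tangent space $T_xE_b$ is a subspace of $T_xE$; because $\omega$ is symplectic when restricted to $E_b$, the symplectic orthogonal complement $H_x := (T_xE_b)^{\omega}$ satisfies $T_xE = T_xE_b \oplus H_x$, so $H$ defines a smooth distribution on $E_U$ complementary to the vertical tangent bundle $\ker df$. For each tangent vector $v\in T_bU$ I would lift it to the unique horizontal vector field $\tilde v$ on $E_b$ projecting to $v$; since $f$ is proper, the flow of $\tilde v$ is complete, and parallel transport along a path $\gamma$ in $U$ gives diffeomorphisms $\Phi_\gamma\colon E_{\gamma(0)}\to E_{\gamma(1)}$. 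The key computation is that these are symplectomorphisms: one shows $\mathcal{L}_{\tilde v}\omega$ restricted to the fibers vanishes. Using Cartan's formula $\mathcal{L}_{\tilde v}\omega = d(\iota_{\tilde v}\omega) + \iota_{\tilde v}d\omega = d(\iota_{\tilde v}\omega)$ since $\omega$ is closed; and by construction of the horizontal distribution $\iota_{\tilde v}\omega$ annihilates all vertical vectors, i.e. its restriction to each fiber is zero, hence so is the restriction of its exterior derivative. Therefore $\tfrac{d}{dt}\Phi_{\gamma|[0,t]}^{\ast}(\omega|_{E_{\gamma(t)}}) = 0$, which gives $\Phi_\gamma^{\ast}(\omega|_{E_{\gamma(1)}}) = \omega|_{E_{\gamma(0)}}$.

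Finally I would assemble the global statement: given arbitrary regular fibers $E_{b_0}$, $E_{b_1}$, pick a path in the connected manifold $B$ from $b_0$ to $b_1$; perturb it to avoid the finite set $C$ (possible since $\dim B \geq 1$ and $C$ is finite, unless $\dim B = 1$ in which case one instead covers the path by charts and chooses intermediate regular values, which exist because regular values are dense); cover it by finitely many balls as above; and compose the resulting symplectomorphisms. The main obstacle is not any single step but the careful handling of two technical points: (i) completeness of the horizontal flow, which genuinely uses properness of $f$ — without it the parallel transport need not be globally defined — and (ii) ensuring the connecting path can be taken inside $B\setminus C$, which requires a small general-position argument when $\dim B = 1$. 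Both are standard, so I expect the proof to be short once the horizontal distribution is in place; the conceptual heart is the one-line Cartan-formula computation showing $\iota_{\tilde v}\omega$ is fiberwise closed, indeed fiberwise zero, by the very definition of $H$ as the $\omega$-orthogonal complement.
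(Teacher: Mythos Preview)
Your proposal is correct and follows essentially the same route as the paper: define the horizontal distribution as the $\omega$-orthogonal complement of the vertical bundle, lift vector fields from the base, use properness for completeness of the flow, and apply Cartan's formula together with the fact that $\iota_{\tilde v}\omega$ vanishes on vertical vectors to conclude that parallel transport preserves the fiberwise symplectic form. Your write-up is in fact a bit more careful than the paper's on the technical points (path-connectedness of $B\setminus C$, and phrasing the Cartan computation on the total space before restricting), but the argument is the same.
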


\begin{proof}
	Using $\omega$ we can decompose the tangent bundle $TE$, over the set of regular values, as a direct sum of a vertical bundle $VE$ and a horizontal bundle $HE:=(VE)^\omega$. Here, the vertical space $V_eE$ is the space of vectors tangent to the fibers of $f$ and the horizontal space $H_eE$ is its symplectic complement. This is well-defined since the restriction of $\omega$ to the fibers is symplectic.
	 
	 Let $b\in B$ be a regular value and $ U\subset B\setminus C$ be a neighborhood of $b$. We take a  vector field $W$ defined on $U$, without singularities. Since $f$ is a submersion on $U$, the map $f_*$ is an isomorphism between $H_eE$ and $T_{f(e)}B$ for all $f(e)\in U$
	 , thus  we can take the vector field $V:=f^*W$ on $E_U$.	 Because the fibers are compact, the flow $\theta$  of $V$ is defined in a neighborhood of $E_b$ for all $t$ in some interval $I$. Therefore $\varphi_t:=\theta(-,t)$ is a diffeomorphism between $E_b$ and some other fiber in a neighborhood. 
	
	In order to show that $\varphi_t$ preserves the symplectic form at the fibers is enough to show that $\left.\frac{d}{d\tau}\right\rvert_{\tau=t}\varphi_\tau^*\omega_{b}=0$ for $t \in I$, where   $\omega_{b}$ is the form $\omega$ restricted to the fiber $E_{b}$.  This follows noting that
	\begin{equation*}
	\left.\frac{d}{d\tau}\right\rvert_{\tau=t}\varphi_\tau^*\omega_{b}=\varphi_t^*(\mathcal{L}_{V}\omega_{b})=\varphi_t^*(d\imath_{V}\omega_{b}+\imath_{V}d\omega_{b})=\varphi_t^*(d\imath_{V}\omega_{b}),
	\end{equation*}
	and that $\imath_{V}\omega_{b}=0$ since $V$ is in $HE$.
\end{proof}
Let $\gamma:[0,1]\to B\setminus C$ be a simple path. We denote by $P_\gamma:E_{\gamma(0)}\rightarrow E_{\gamma(1)}$ the symplectomorphism given by the lifting of $\gamma$ as in the previous proposition.
\begin{corollary}
	Let $Y$ be a projective manifold and  $f:Y\rightarrow \mathbb{P}^1$ be a Lefschetz fibration with critical values $C$. For a simple path $\gamma:[0,1]\to \mathbb{P}^1\setminus C$, the map $P_\gamma$ is a symplectomorphism.
\end{corollary}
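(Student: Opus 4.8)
The plan is to deduce this directly from Proposition~\ref{symplecticEhresmann}. First I would record that the hypotheses of that proposition are automatically satisfied here: a projective manifold $Y\hookrightarrow\mathbb{P}^N$ is compact, so $f\colon Y\to\mathbb{P}^1$ is proper; by the definition of a Lefschetz fibration it is surjective with only finitely many critical values; the Fubini--Study pullback $\omega$ restricts to a symplectic form on every regular fiber because those fibers are complex submanifolds of $Y$; and $\mathbb{P}^1$ is connected. Thus the splitting $TY=VY\oplus HY$ with $HY=(VY)^\omega$ from the proof of the proposition is defined over $\mathbb{P}^1\setminus C$, and the regular fibers are symplectomorphic.

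Next I would unwind the construction of $P_\gamma$ for the given simple path. The image $\gamma([0,1])$ is compact, so it can be covered by finitely many open sets $U_1,\dots,U_k\subset\mathbb{P}^1\setminus C$ on each of which there is a nowhere-vanishing vector field $W_i$ whose flow transports points along $\gamma$ (for instance the pushforward of $\partial_t$ under a local parametrization of the path). Picking a partition $0=t_0<t_1<\cdots<t_k=1$ with $\gamma([t_{i-1},t_i])\subset U_i$, one lifts each $W_i$ to the horizontal field $V_i=f^*W_i$ on $f^{-1}(U_i)$; since the fibers are compact the flow of $V_i$ exists for all the times needed and yields a diffeomorphism from $E_{\gamma(t_{i-1})}$ onto $E_{\gamma(t_i)}$, and $P_\gamma$ is the composition of these $k$ diffeomorphisms. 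Each factor preserves the fiberwise symplectic form by exactly the computation already carried out for Proposition~\ref{symplecticEhresmann}: since $\imath_{V_i}\omega=0$ on horizontal vectors, $\left.\frac{d}{d\tau}\right\rvert_{\tau=t}\varphi_\tau^*\omega_{b}=\varphi_t^*(d\imath_{V_i}\omega_{b})=0$. Hence $P_\gamma$ is a composition of symplectomorphisms between the corresponding regular fibers, and therefore a symplectomorphism.

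The one point that needs more than a verbatim citation of Proposition~\ref{symplecticEhresmann} is this globalization along $\gamma$: a single neighborhood $U$ as in the proposition need not contain all of $\gamma([0,1])$, so the local parallel transports must be patched along a subdivision of $[0,1]$ as above (and, if one wishes, one checks in the standard Ehresmann way that the result is independent of the chosen lifts, though only the symplectomorphism property is asserted here). Everything else is already in place, so I do not expect any genuine obstacle.
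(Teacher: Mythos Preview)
Your proposal is correct and matches the paper's approach: the corollary is stated without a separate proof, being treated as an immediate consequence of Proposition~\ref{symplecticEhresmann} together with the definition of $P_\gamma$ as the lift of $\gamma$ described just before the corollary. Your write-up simply makes explicit the routine verification of the proposition's hypotheses and the patching of local parallel transports along a subdivision of $[0,1]$, which the paper leaves implicit.
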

  From now on, we will denote by $X:=Y_b\hookrightarrow Y$ a regular fiber of $f$. Since these fibers are  symplectomorphic we simply denote any symplectic fiber by $(X, \omega_X)$. Thus,  we have a map $\pi_1(\mathbb{P}^1\setminus C)\rightarrow {\rm Symp}(X,\omega_X)$ which descends to homology, inducing the so called \textit{monodromy action} $\pi_1(\mathbb{P}^1\setminus C)\action H_*(X, \Z)$ given by $(\gamma, \delta) \to (P_\gamma)_*\delta$.

Let $\gamma:[0,1]\rightarrow \mathbb{P}^1$ be a simple path such that $\gamma(1)\in C$ and $\gamma(t)\in\mathbb{P}^1\setminus C$ for $t\in [0,1)$. Let $p$ be a critical point in $f^{-1}(\gamma(1))$. 	The  set of points $$ 
	V_{\gamma}=\{z\in f^{-1}(Im(\gamma))\text{ }|\text{ }\lim_{t\rightarrow 1}P_{\gamma}(t)(z)=p\}  $$
is called \textit{Lefschetz thimble} and the intersection of $V_{\gamma}$ with the fiber $f^{-1}(\gamma(0))$  is called \textit{vanishing cycle} $\delta_{\gamma}$.
\begin{proposition}
	\label{vanishingthimble}
	The Lefschetz thimble $V_\gamma$ is a Lagrangian submanifold of $(Y, \omega)$ and the vanishing cycle $\delta_\gamma$ is a Lagrangian sphere of $(X, \omega_X)$.
\end{proposition}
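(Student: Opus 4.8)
The plan is to work in the Morse chart around the critical point $p \in f^{-1}(\gamma(1))$ and then propagate the Lagrangian condition along $\gamma$ using the symplectic parallel transport $P_\gamma$ established in Proposition \ref{symplecticEhresmann}. First I would set up local coordinates $z = (z_1,\dots,z_n)$ near $p$ in which $f(z) = f(p) + z_1^2 + \cdots + z_n^2$, and I would choose $\gamma$ so that near its endpoint it is the straight segment $f(p) + t$, $t \in (0,\varepsilon]$, running into the critical value along the positive real axis (any simple path is isotopic to such a model near the endpoint, and the thimble only depends on the homotopy class rel endpoints). Over this segment one checks directly that the set $\{z : z_j = \sqrt{t}\, x_j,\ x_j \in \R,\ \sum x_j^2 = 1,\ t \in [0,\varepsilon]\}$ — i.e. the real points scaled by $\sqrt{t}$ — is exactly the locus of points whose horizontal lift tends to $p$ as $t \to 0$. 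Indeed, the fiber over $f(p)+t$ is the complex affine quadric $\sum z_j^2 = t$, its real locus is a round sphere of radius $\sqrt t$, and the standard computation (e.g. as in \cite{seidel, RThomas}) shows that the gradient/horizontal flow of the function $\mathrm{Re}\,f$ with respect to the Kähler metric contracts precisely this real sphere to the origin. This identifies $V_\gamma$ near $p$ with an $n$-ball and $\delta_\gamma$ with an $(n-1)$-sphere, giving the topological claim.

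For the Lagrangian claim, the key point is that $V_\gamma$ is swept out by symplectic parallel transport of the vanishing sphere $\delta_\gamma$ along $\gamma$. Concretely, $V_\gamma = \bigcup_{t} P_{\gamma|_{[0,t]}}(\delta_\gamma) \cup \{p\}$. Away from $p$ this is an $n$-dimensional submanifold of $Y$; I would show it is isotropic by decomposing any tangent vector as a sum of a vertical part (tangent to the fiber) and the horizontal lift $\widetilde{W}$ of $\dot\gamma$. Two vertical vectors pair to zero because they are tangent to $\delta_\gamma$, which by the local model is Lagrangian in the fiber (the real sphere in the affine quadric, with the restricted Kähler form, is Lagrangian — this is again a direct local computation). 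A vertical vector paired with $\widetilde{W}$ vanishes because $\widetilde W \in HE = (VE)^\omega$ by definition of the horizontal distribution. And $\widetilde W$ paired with itself is zero by antisymmetry. Hence $\omega|_{V_\gamma \setminus \{p\}} = 0$; since $\dim V_\gamma = n = \tfrac12 \dim Y$, it is Lagrangian there, and the condition extends across the single point $p$ by continuity (the closure of an isotropic set on which $\omega$ vanishes on the regular part, together with a dimension count, forces $\omega$ to vanish on all of $T_p V_\gamma$ — alternatively one reads this off directly from the Morse chart, where $V_\gamma$ is the real-linear slice $\{z = \sqrt t\, x\}$ and $\omega_{\mathrm{std}}$ restricted to it is manifestly zero). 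Restricting to $t = 0$ then shows $\delta_\gamma$ is a Lagrangian sphere in $(X,\omega_X)$.

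The main obstacle I anticipate is the interface between the two regimes: verifying carefully that the horizontal-transport description of $V_\gamma$ and the Morse-chart description agree near $p$, and that the Kähler metric used to define the horizontal distribution in Proposition \ref{symplecticEhresmann} is close enough to the standard flat metric in the Morse coordinates that the limiting behavior $\lim_{t\to 1} P_\gamma(t)(z) = p$ genuinely picks out the real sphere. This is handled in the references by a normal-form argument (one may choose the Morse chart so that $\omega$ agrees with the standard form to first order, then use a Moser-type deformation, or one argues directly that the stable manifold of $p$ under the relevant gradient flow is independent of the choice of compatible metric up to the needed order). The remaining steps — the pairing computations and the dimension count — are routine once the normal form is in place.
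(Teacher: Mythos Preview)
Your proposal is correct and follows the same overall architecture as the paper: pass to a Morse chart, identify the thimble with the (un)stable manifold of the critical point for the gradient flow of $\mathrm{Re}\,f$, and prove isotropy by decomposing tangent vectors into vertical and horizontal parts. The vertical–horizontal and horizontal–horizontal pairings vanish for exactly the reasons you give, and those are the same reasons the paper uses.

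The one genuine difference is in how the vertical–vertical pairing is killed. You argue that the real sphere inside the affine quadric is Lagrangian for the restricted form, which is only literally true for the \emph{standard} form; since the Morse coordinates are holomorphic but need not make $\omega$ standard, you correctly flag this as the main obstacle and propose a Moser/normal-form fix. The paper bypasses this entirely: instead of computing in the local model, it observes that symplectic parallel transport preserves $\omega_X$, so $\omega_X(u_v,v_v)$ is constant along the flow toward $p$; in the limit the vanishing sphere collapses to a point, the vertical tangent space degenerates, and hence the value must have been zero all along. This continuity argument works for the actual ambient $\omega$ without any normalization, so it is shorter and avoids the step you identified as delicate. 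Your route, on the other hand, gives an explicit Lagrangian model and makes contact with the standard treatments in \cite{seidel, RThomas}, at the cost of that extra normal-form input.
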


\begin{proof}
In a  compact neighborhood $U$ of $p$ we can  suppose that $f(z)=f(p)+z_1^2+\cdots+z_n^2$ and that $\gamma$ is a real curve in $\C$ with $\gamma(1)=0$, and $\gamma(0)>0$. 
Let  $H:U\to \R$ be the map given by $H(z)=\text{Re} (f(z))$. The Hamiltonian vector field $X_H$ is horizontal  because $H$ is constant in the fibers of $f$ and $\omega(X_H, V)=dH(V)=0$ for any vertical vector field $V$. Since $JV$ is also vertical then $\nabla H$ is horizontal. On the other hand $-\nabla H$ projects to $\frac{\partial }{\partial x}$ and so $V_\gamma$ is the unstable set of $p$ .
	
By a direct computation $H$ is a Morse function with index $n$. Using the unstable manifold theorem \cite[Thm. 4.2]{stable} we conclude that $V_\gamma$  is a $n$-ball  inside $Y$. To see that $V_{\gamma}$ is isotropic,  consider $u,v\in T_zV_{\gamma}$ for any $z\in V_{\gamma}$. Since the horizontal component of $V_\gamma$ is one dimensional,  we have 
$	\omega_z(u,v)=\omega_X(z)(u_v,v_v),$  where $u_v$ and $v_v$ are the vertical components of $u$ and $v$. As the fibers over $\gamma(t)$ with $t\in[0,1)$ are symplectomorphic via $\varphi_t$, we have that 
$$\omega_X(z)(u_v, v_v)=\omega_X({z(t)})(u_v(t), v_v(t))$$ where $z(t)=\varphi_t(z)$, $u_v(t)=(\varphi_t(z))_{*}u_v$  and  $v_v(t)=(\varphi_t(z))_{*}v_v$. In  the limit the tangent space is a point, then by continuity we can conclude the result.
\end{proof}
Let $f\in \C[z_1,\ldots, z_n]$ and let $b\in\C$ be some  regular value of $f$. Suppose that the origin is an isolated critical point of the highest-grade homogeneous piece of $f$. The $(n-1)$-homology group of the fiber over $b$ is generated by the vanishing cycles, see \cite{Lamotke}, \cite[\S 7.4]{hodgehossein}. As a consequence we can prove the next proposition.
\begin{proposition}
	\label{main1}
	Let $F\in \C[z_0,\ldots, z_{n}]$ be a homogeneous polynomial with  $n$ even. Suppose that $F$ defines  a smooth variety $X$ in $\p^n$. Then, any homology class $\delta\in H_{n-1}(X,\Z)$ can be written as a finite sum $\delta=\sum_{j} a_j\delta_j$, where $a_j\in \Z$ and $\delta_j$ is supported in a Lagrangian $(n-1)$-sphere. 
\end{proposition}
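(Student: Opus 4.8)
The plan is to cut $X$ down to an affine hypersurface, invoke the generation result recalled just before the statement, and use Proposition~\ref{vanishingthimble} to recognise the resulting vanishing cycles as Lagrangian spheres. After a generic linear change of homogeneous coordinates we may assume, by Bertini, that the hyperplane section $X_\infty:=X\cap\{z_0=0\}$ is a smooth hypersurface of dimension $n-2$ in $\{z_0=0\}\cong\p^{n-1}$. In the affine chart $\{z_0\neq 0\}\cong\C^n$, with coordinates $w_i=z_i/z_0$, the variety $X$ becomes $X_{\mathrm{aff}}=\{f=0\}$ where $f(w)=F(1,w)$ has degree $d=\deg F$ and highest-grade homogeneous part $F(0,w)$. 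Euler's relation $d\,F=\sum_i z_i\,\partial_{z_i}F$ together with the smoothness of $X$ shows that $0$ is a regular value of $f$, and the smoothness of $X_\infty$ shows that the origin is an isolated critical point of $F(0,w)$; so the hypotheses of the quoted result (\cite{Lamotke}, \cite[\S 7.4]{hodgehossein}) hold with $b=0$, and $H_{n-1}(X_{\mathrm{aff}},\Z)$ is generated by the vanishing cycles of the family $\{f=s\}_{s\in\C}$.

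Each of these vanishing cycles is represented by a Lagrangian $(n-1)$-sphere in the fibre in which it lives: near the degenerating critical point one is in the local model $z_1^2+\dots+z_n^2$, where the vanishing cycle is the standard Lagrangian sphere, exactly as in the proof of Proposition~\ref{vanishingthimble} (whose argument is local around the critical point). Since $X_{\mathrm{aff}}\subset X$ is open and $\omega_{\p^n}|_{X_{\mathrm{aff}}}=(\omega_{\p^n}|_X)|_{X_{\mathrm{aff}}}=\omega_X|_{X_{\mathrm{aff}}}$, such a sphere is Lagrangian in $(X,\omega_X)$ as well. Thus $H_{n-1}(X_{\mathrm{aff}},\Z)$ is generated by classes of Lagrangian $(n-1)$-spheres lying in $X$, and it only remains to see that the inclusion induces a surjection $H_{n-1}(X_{\mathrm{aff}},\Z)\to H_{n-1}(X,\Z)$.

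For this, use the long exact sequence of the pair $(X,X_{\mathrm{aff}})$ together with the Thom isomorphism $H_k(X,X_{\mathrm{aff}})\cong H_{k-2}(X_\infty)$ for the normal bundle of the smooth divisor $X_\infty\subset X$. Surjectivity holds as soon as the map $H_{n-1}(X,\Z)\to H_{n-1}(X,X_{\mathrm{aff}},\Z)\cong H_{n-3}(X_\infty,\Z)$ is zero; by Poincar\'e--Lefschetz duality this map is the restriction $H^{n-1}(X,\Z)\to H^{n-1}(X_\infty,\Z)$, so it is enough that $H^{n-1}(X_\infty,\Z)=0$. Here the parity of $n$ is used: $X_\infty$ is a smooth hypersurface in $\p^{n-1}$, so by the Lefschetz hyperplane theorem (below its middle degree $n-2$) and Poincar\'e duality (above it) its integral cohomology agrees with that of $\p^{n-1}$ in all degrees other than $n-2$; since $n-2$ is even, $H^{n-1}(X_\infty,\Z)$ vanishes as an odd-degree cohomology group of projective space. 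Therefore any $\delta\in H_{n-1}(X,\Z)$ is a finite $\Z$-combination $\sum_j a_j\delta_j$ with each $\delta_j$ the class of a Lagrangian $(n-1)$-sphere, as claimed.

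I expect the main obstacle to be precisely this last step — pinning down where the hypothesis ``$n$ even'' enters and carrying the argument out over $\Z$ rather than $\Q$; what makes it work is that the cohomology of a smooth projective hypersurface is torsion-free away from its middle degree, so no torsion can survive in $H^{n-1}(X_\infty,\Z)$. A secondary technical point is ensuring that, after the generic change of coordinates, $f$ is tame enough for both the vanishing-cycle generation statement and the local Lagrangian model to apply (in particular that the singular fibres of $\{f=s\}$ can be taken to have ordinary double points, possibly after a small Morsification of $f$). One can instead avoid the affine picture entirely: embed $X$ as a smooth member of a generic Lefschetz pencil of degree-$d$ hypersurfaces, blow up the base locus $B$ to get a Lefschetz fibration $\pi\colon\mathrm{Bl}_B\p^n\to\p^1$, apply Proposition~\ref{vanishingthimble} to $\pi$ directly, and use the blow-up formula $H_{n-1}(\mathrm{Bl}_B\p^n,\Z)\cong H_{n-1}(\p^n,\Z)\oplus H_{n-3}(B,\Z)$, which vanishes again by the parity of $n$, to conclude that the vanishing cycles of $\pi$ generate all of $H_{n-1}(X,\Z)$.
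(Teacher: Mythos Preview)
Your argument is correct and follows essentially the same route as the paper: pass to the affine piece $U=X\setminus Z$ with $Z$ a transverse hyperplane section, use the long exact sequence of $(X,U)$ together with the Thom/Gysin isomorphism $H_{n-1}(X,U)\cong H_{n-3}(Z)$, kill this group via the Lefschetz hyperplane theorem and the parity of $n$, and then invoke the cited generation result plus Proposition~\ref{vanishingthimble}. The only cosmetic difference is that the paper shows $H_{n-3}(Z)=0$ directly (since $H_k(Z)\cong H_k(\p^{n-2})$ for $k\neq n-2$ and $n-3$ is odd), whereas you take the detour through Poincar\'e duality to $H^{n-1}(X_\infty)$; your alternative Lefschetz-pencil sketch and your remarks on torsion and Morsification are extra commentary not present in the paper.
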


\begin{proof}
	Consider a hyperplane that intersects transversally $X$, and let $Z$ be  its intersection. We can suppose that the hyperplane section is $Z=X\cap \{z_{0}=0\}$. Let $f\in \C[z_1,\ldots, z_n]$ be the  polynomial $F(1,z_1,\ldots, z_n)$ and we define the affine variety $U:=X\setminus Z=\{(z_1,\ldots, z_n)\in \C^{n}\text{ }|\text{ }f(z_1,\ldots, z_n)=0\}$.  The pair $(X, U)$ induces the exact sequence in homology $$\cdots \to H_{n}(X,U)\to H_{n-1}(U)\to H_{n-1}(X)\to H_{n-1}(X,U)\to \cdots,$$ where the map $H_k(U)\to H_k(X)$ comes from  the inclusion $U\subset X$. By Leray-Thom-Gysin isomorphism we have $H_{k}(X,U)\simeq H_{k-2}(Z)$. By Lefschetz hyperplane section theorem we know that $H_k(Z)\simeq H_k(\p^{n-2})$ if $k\neq n-2$, see \cite[\S 5.4]{hodgehossein}. Since $n$ is even we have that $H_{n-3}(Z)=0$, then the map $$H_{n-1}(U)\to H_{n-1}(X)\to H_{n-3}(Z)=0$$is surjective. The vanishing cycles associated to the fibration $f:\C^n\to \C$ generate the homology group $H_{n-1}(U)$, and they are supported in Lagrangian spheres of $U$.
\end{proof}

\section{Monodromy action on mirror quintic threefolds}
\label{monodromyonMQT}
In this section we recall the definition of a mirror quintic Calabi-Yau threefold and its monodromy action coming from the Picard-Fuchs equations. We also list the monodromy action of other 14 examples of Calabi-Yau threefolds.  For a more detailed description, the reader is referred to \cite{candelasdelaossa, doranmorgan, movasatigaussconnection, HMSUeda}.\\

The family of hypersurfaces in $\p^4$ given by a generic polynomial of degree 5 is denoted $\p^4[5]$. The elements of $\p^4[5]$ are quintic Calabi-Yau threefolds, with Hodge numbers $h^{1,1}=1$ and $h^{2,1}=101$. Let $\{X_\varphi\}_{\varphi}$ be the one-parameter family of hypersurfaces in $\p^4$ given by
\begin{equation}
\label{pz}
p_\varphi=\varphi z^5_0+z^5_1+z^5_2+z^5_3+z^5_4-5 z_0z_1z_2z_3z_4,\hspace{4mm}\varphi\neq 0,1.
\end{equation}
Consider the finite group
$$G=\{(\xi_0,\xi_1, \xi_2, \xi_3, \xi_4)\in \C^5\text{ }|\text{ }\xi_i^5=1\text{ , }\xi_0\xi_1\xi_2\xi_3\xi_4=1\}$$
acting on $\p^4$, as $(\xi_0,\xi_1, ..., \xi_4)\cdot [z_0:z_1:\cdots:z_4]=[\xi_0z_0:\cdots:\xi_4z_4]$. It is known that the action of $G$ is free away from  the curves $C_{ijk}:=\{z_i^5+z_j^5+z_k^5=0, \text{ }z_l=0 \text{ for all } l\neq i,j,k\}$ for $0\le i<j<k\le 4$, see \cite{MorrisonMirror}.  The \textit{mirror quintic Calabi-Yau threefold}, mirror quintic for short, is the variety $\tilde{X}_\varphi$  obtained after resolving the orbifolds singularities of the quotient $X_\varphi/G$. The manifold $\tilde X_\varphi$, has Hodge numbers $h^{1,1}=101$ and $h^{2,1}=1$ and Betti number $b_3=4$. In terms of the mirror symmetry $\p^4[5]$ is called the \textit{A-model} and $\{\tilde X_{\varphi}\}_{\varphi}$ the \textit{B-model}, see for example \cite{KontsevichHAMS}.

The variety $\tilde X_\varphi$  has a  holomorphic 3 form $\eta$  that vanishes nowhere. Moreover, $H^{3,0}$ is spanned by $\eta$.  The \textit{periods} of $\eta$ are functions $\int_\Delta \eta$, where the homology class $\delta=[\Delta]\in H_3(\tilde X_\varphi, \Z)$ is supported in the submanifold $\Delta$. The fourth-order linear differential equation $$\left(\theta^4-\varphi\left(\theta+\frac{1}{5}\right)\left(\theta+\frac{2}{5}\right)\left(\theta+\frac{3}{5}\right)\left(\theta+\frac{4}{5}\right)\right)y=0, \hspace{4mm}\theta=\varphi\frac{\partial}{\partial \varphi}$$ 
is called Picard-Fuchs equations, and its solutions are the periods of $\eta$.

The Picard-Fuchs ODE has 3 regular singular points $\varphi=0, 1, \infty$. The analytic continuation of this ODE, gives us the monodromy operators $M_0, M_1, M_{\infty}$. Since the monodromy is a representation $\rho:\pi_1(\p^1\setminus\{0,1,\infty\})\to Sp(4)$, we have the relation $M_0M_1M_\infty=Id$. There exits a basis
 such that the monodromy operators in this basis are written as
$$M_0=\begin{pmatrix}
1&1&0&0\\
0&1&0&0\\
5&5&1&0\\
0&-5&-1&1
\end{pmatrix} \text{ and } M_1=\begin{pmatrix}
1&0&0&0\\
0&1&0&1\\
0&0&1&0\\
0&0&0&1
\end{pmatrix},$$
see for example \cite{pingpong,  PFmonodromy, doranmorgan}.

The matrices $M_0$ and $M_1$ are conjugated to the matrices
$$T_0=\begin{pmatrix}
1&1&0&0\\
0&1&5&0\\
0&0&1&1\\
0&0&0&1
\end{pmatrix} \text{ and } T_1=\begin{pmatrix}
1&0&0&0\\
-5&1&0&0\\
-1&0&1&0\\
-1&0&0&1
\end{pmatrix}$$
appearing in \cite{PFmonodromy,doranmorgan}, via the matrix 
$P=\begin{pmatrix}
0&0&0&-1\\
0&5&1&0\\
1&1&0&0\\
0&1&0&0
\end{pmatrix}.
$
Thus $P^{-1}T_iP=M_i$, $i=0,1$. In \cite{candelasdelaossa}, the matrices for the monodromy are
$$S_\infty=\begin{pmatrix}
51&90&-25&0\\
0&1&0&0\\
100&175&-49&0\\
-75&-125&35&1
\end{pmatrix} \text{ and } S_1=\begin{pmatrix}
1&0&0&0\\
0&1&0&1\\
0&0&1&0\\
0&0&0&1
\end{pmatrix}$$
these matrices are associated to the equation 
\begin{equation*}
\label{ppsi}
p_{\psi}=z_0^5+z_1^5+z_2^5+z_3^5+z_4^5-5\psi z_0z_1z_2z_3z_4,
\end{equation*} with singularities at $\psi^5=1, \infty$. The change of variable $\psi=\varphi^{\frac{-1}{5}}$ gives us the family defined by the equation (\ref{pz}). Moreover, the matrix $M_0^5$ is conjugated to $S_\infty$. In fact with the matrix 
\begin{equation}
\label{changeofbasis}
M=\begin{pmatrix}
3&0&1&0\\
0&1&0&0\\
5&0&2&0\\
0&0&0&1
\end{pmatrix}
\end{equation}
we obtain the equations $M^{-1}S_1M=M_1$ and $M^{-1}S_\infty M=M_0^5$.\\

More generally, it is known that the differential equation
\begin{equation}
\label{CY3ODE}
(\theta^4-\varphi(\theta+A)(\theta+1-A)(\theta+B)(\theta+1-B))y=0,\hspace{4mm}\theta=\varphi\frac{\partial}{\partial \varphi}
\end{equation}
corresponds to the Picard-Fuchs equation of a mirror Calabi-Yau threefold for  14 values of $(A, B)$, and the singularities are in $\varphi=0,1,\infty$. We have listed the \textit{A-model} of these 14 examples in   Table \ref{14examplevalues}.
\begin{table}[H]
	\small
	\label{table14example}
	\centering
	\begin{tabular}{|l|l|l|l|}
		\hline
		$(d,k)$&$A$ & $B$ & A-model of equation (\ref{CY3ODE}) \\
		\hline
		\hline
		$(5,5)$&$1/5$&$2/5$&$X(5)\subset \p^4$\\
		\hline
		$(2,4)$&$1/8$&$3/8$&$X(8)\subset \p^4(1,1,1,1,4)$\\
		\hline
		$(1,4)$&$1/12$&$5/12$&$X(2,12)\subset \p^5(1,1,1,1,4,6)$\\
		\hline
		$(16,8)$&$1/2$&$1/2$&$X(2,2,2,2)\subset \p^7$\\
		\hline
		$(12,7)$&$1/3$&$1/2$&$X(2,2,3)\subset \p^6$\\
		\hline
		$(8,6)$&$1/4$&$1/2$&$X(2,4)\subset \p^5$\\
		\hline
		$(4,5)$&$1/6$&$1/2$&$X(2,6)\subset \p^5(1,1,1,1,1,3)$\\
		\hline
		$(2,3)$&$1/4$&$1/3$&$X(4,6)\subset \p^5(1,1,1,2,2,3)$\\
		\hline
		$(1,2)$&$1/6$&$1/6$&$X(6,6)\subset \p^5(1,1,2,2,3,3)$\\
		\hline
		$(6,5)$&$1/6$&$1/4$&$X(3,4)\subset \p^5(1,1,1,1,1,2)$\\
		\hline
		$(3,4)$&$1/6$&$1/3$&$X(6)\subset \p^4(1,1,1,1,2)$\\
		\hline
		$(1,3)$&$1/10$&$3/10$&$X(5)\subset \p^4(1,1,1,2,5)$\\
		\hline
		$(4,4)$&$1/4$&$1/4$&$X(4,4)\subset \p^5(1,1,1,1,2,2)$\\
		\hline
		$(9,6)$&$1/3$&$1/3$&$X(3,3)\subset \p^5$\\
		\hline
	\end{tabular} 
	\caption{\footnotesize Fourteen values for equation \ref{CY3ODE} with the corresponding Calabi-Yau threefold.}
	\label{14examplevalues}
\end{table}

The notation $X(d_1,d_2,\ldots, d_l)\subset \p^n(w_1,w_2,\ldots, w_n)$ denotes a complete intersection of $l$ hypersurfaces of degrees $d_1, d_2,\ldots, d_l$ in the weighted projective space with weight $(w_1, w_2,\ldots, w_n)$, see for example \cite{PFmonodromy}. For these cases the monodromy matrices correspond to the same $M_1$ as before and $$M_0=\begin{pmatrix}
1&1&0&0\\
0&1&0&0\\
d&d&1&0\\
0&-k&-1&1
\end{pmatrix}.$$ 
\section{Lagrangian sphere and Lagrangian torus in mirror quintic threefold}
\label{lagrangianST}
In the basis of  homology used in \cite{candelasdelaossa} there are two homology classes which  are supported on Lagrangian submanifolds. We  observe that one class is realized by a Lagrangian 3-sphere and the other by a  Lagrangian 3-torus. \\

Consider the mirror quintic Calabi-Yau threefold $\tilde X_\psi$ associated to the equation \begin{equation*}
\label{ppsi1}
p_{\psi}=z_0^5+z_1^5+z_2^5+z_3^5+z_4^5-5\psi z_0z_1z_2z_3z_4,
\end{equation*} with singularities in $\psi=1, \infty$.
Let $\eta$ be the holomorphic  form on $\tilde X_\psi$. The basis of the matrices $S_\infty$ and $S_0$ are the periods $\int_{\Delta_k} \eta $ with $k=1,2,3,4$. The cycle $\Delta_2$ is a torus associated with the degeneration of the manifold as $\psi$ goes to $\infty$, see \cite[\S 3]{candelasdelaossa}. In coordinates it can be described as
\begin{equation}
\label{cycletorus}
\Delta_2=\{[1:z_1:z_2:z_3:z_4]\in \p^4\text{ }|\text{ } |z_1|=|z_2|=|z_3|=r {\footnotesize \text{ and } z_4 \text{ given by equation $p_{\psi}=0$ when $\psi \to \infty$}}\}
\end{equation}
for $r>0$ small enough, and  $z_4$ is defined as the branch of the solution $p_{\psi}(z)=0$ which tends to zero as $\psi \to \infty$. The cycle $\Delta_2$ does not intersect the curves $C_{ijk}$, and so its quotient by the group $G$ is again a torus.

\begin{proposition}
	The cycle $\Delta_2$ is a Lagrangian submanifold of $( X_\psi, \omega)$, where $\omega$ is the symplectic form given by the pullback of the Fubini-Study form.
\end{proposition}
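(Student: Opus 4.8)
The plan is to deduce the statement from the single equality $\iota^{*}\omega=0$, where $\iota\colon\Delta_2\hookrightarrow X_\psi$ is the inclusion. First a dimension count: $X_\psi$ is a smooth threefold, so $\dim_{\R}X_\psi=6$, while $\Delta_2$ is a $3$-torus, hence $\dim_{\R}\Delta_2=3=\tfrac12\dim_{\R}X_\psi$; therefore $\Delta_2$ is Lagrangian as soon as it is isotropic. To check isotropy I would use the explicit model (\ref{cycletorus}): in the affine chart $z_0=1$ of $\p^4$, parametrize $\Delta_2$ by $(\theta_1,\theta_2,\theta_3)\in(\R/2\pi\Z)^3$ through $z_j=re^{i\theta_j}$ for $j=1,2,3$, and observe that along $\Delta_2$ the remaining coordinate is $z_4=g(z_1,z_2,z_3)$ for a holomorphic function $g$, namely the branch of $p_\psi(1,z_1,z_2,z_3,\cdot)=0$ tending to $0$ as $\psi\to\infty$; this branch is single-valued and holomorphic near $\Delta_2$ by the implicit function theorem, since $\partial p_\psi/\partial z_4\neq0$ there for $r$ small.

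Next I would restrict the Kähler form $\omega=\tfrac i2\,\partial\bar\partial\log\bigl(1+\textstyle\sum_{j=1}^4|z_j|^2\bigr)$ to this parametrization. Along $\Delta_2$ one has $dz_j=iz_j\,d\theta_j$ for $j=1,2,3$, so each $dz_j\wedge d\bar z_j$ vanishes and $\sum_{j=1}^{3}\bar z_j\,dz_j=ir^2\sum_{j=1}^{3}d\theta_j$ is (up to the factor $i$) a real $1$-form; all remaining contributions to $\iota^{*}\omega$ are carried by $z_4$, entering through $dz_4=i\sum_{j=1}^{3}\bigl(z_j\,\partial_{z_j}g\bigr)\,d\theta_j$. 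After collecting terms, $\iota^{*}\omega$ is a purely angular $2$-form $\sum_{j<k}\lambda_{jk}(\theta)\,d\theta_j\wedge d\theta_k$ whose coefficients are expressions in $z_4$ and the quantities $z_j\,\partial_{z_j}g$, and the task is to show $\lambda_{jk}\equiv0$. The conceptual reason, which I would try to turn into the proof, is visible in the large complex structure limit $\psi\to\infty$: there $z_4\equiv0$ and $\Delta_2$ degenerates to the torus $\{z_0=1,\ |z_1|=|z_2|=|z_3|=r,\ z_4=0\}$ inside the component $\{z_4=0\}\cong\p^3$ of the limiting fibre, where it is a regular fibre of the moment map of the Hamiltonian $T^3$-action rotating $z_1,z_2,z_3$ — hence Lagrangian in $\p^3$ and isotropic in $\p^4$. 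For finite $\psi$ one then has two options to conclude: either verify $\lambda_{jk}=0$ by the direct computation, using that the equal radii $|z_1|=|z_2|=|z_3|$ and the symmetry of the chosen branch make the obstructing terms cancel; or propagate the limiting Lagrangian into each $X_\psi$ along the symplectic connection of Proposition \ref{symplecticEhresmann} (and its Corollary), which acts by symplectomorphisms and therefore carries Lagrangians to Lagrangians.

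The step I expect to be the main obstacle is precisely the contribution of $z_4$. Unlike the three circle directions, $z_4$ genuinely varies over $\Delta_2$ and couples the angular forms through $z_j\,\partial_{z_j}g$, so both the term $\tfrac1K\,dz_4\wedge d\bar z_4$ and the cross term coming from $\bigl(\sum_k\bar z_k\,dz_k\bigr)\wedge\bigl(\sum_j z_j\,d\bar z_j\bigr)$ in $\partial\bar\partial\log K$ are a priori nonzero and must be shown to cancel. I expect this cancellation to hinge on the equal-radii condition together with a symmetry of the vanishing branch — concretely, that to the order relevant near $\psi=\infty$ the function $g$ depends on $z_1,z_2,z_3$ only through the product $z_1z_2z_3$, which forces $z_1\,\partial_{z_1}g=z_2\,\partial_{z_2}g=z_3\,\partial_{z_3}g$ and kills every $\lambda_{jk}$. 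Pinning down this symmetry for the exact branch (or, equivalently, identifying the correct Lagrangian representative of the class $\delta_2$ near the limit and transporting it rigorously through the family while controlling the degeneration) is the part of the argument that requires the most care.
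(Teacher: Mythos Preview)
Your route is quite different from the paper's, and it has a genuine gap at exactly the point you flag as ``the main obstacle.'' Neither of your two proposed resolutions works. Option~(a) rests on the claim that the branch $g$ depends on $z_1,z_2,z_3$ only through the product $z_1z_2z_3$; this is false. From $1+z_1^5+z_2^5+z_3^5+z_4^5-5\psi z_1z_2z_3z_4=0$ the leading approximation near $\psi=\infty$ is $z_4\approx(1+z_1^5+z_2^5+z_3^5)/(5\psi z_1z_2z_3)$, which visibly depends on the individual $z_j^5$ and not merely on the product, so there is no reason for $z_1\partial_{z_1}g=z_2\partial_{z_2}g=z_3\partial_{z_3}g$, and the coefficients $\lambda_{jk}$ will not cancel for that reason. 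Option~(b) is illegitimate as stated: $\psi=\infty$ is a \emph{singular} value of the family, so Proposition~\ref{symplecticEhresmann} and its corollary do not furnish a symplectomorphism from the limiting configuration to a smooth $X_\psi$; and even if one could transport some Lagrangian torus out of the large complex structure limit, you would obtain a Lagrangian in the homology class of $\Delta_2$, not the specific cycle $\Delta_2$ defined by~(\ref{cycletorus}), which is what the proposition asserts.

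The paper avoids the whole computation with the K\"ahler potential. It lifts to $\C^5\setminus\{0\}$ and uses symplectic reduction for the diagonal $S^1$-action with moment map $\mu(z)=\tfrac{-\|z\|^2+1}{2}$, so that $\pi^*\omega_{FS}=\imath^*\omega_{can}$ on $S^9$. Since $\omega_{can}=\tfrac12\sum_j d|z_j|^2\wedge d\theta_j$ vanishes on the $5$-torus $T=\{|z_0|=\epsilon,\ |z_1|=|z_2|=|z_3|=r,\ |z_4|^2=1-\epsilon^2-3r^2\}\subset S^9$, its image $\pi(T)\subset\p^4$ is Lagrangian for $\omega_{FS}$. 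The paper then argues that $\Delta_2$ sits inside $\pi(T)$ (as the intersection $X_\psi\cap\pi(T)$), so $T_p\Delta_2\subset T_p\pi(T)$ at every point and $\omega_{FS}|_{\Delta_2}=0$ follows without ever differentiating $g$. The conceptual gain is that isotropy is inherited from an ambient Lagrangian in $\p^4$ coming from the toric moment map; this is the idea your direct computation is missing.
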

\begin{proof}
	Consider the Hamiltonian $S^1$-space $(\C^{5}\setminus \{0\}, \omega_{can}, S^1, \mu)$, where $\mu(z)=\frac{-||z||^2+1}{2}$. By Marsden-Weinstein-Meyer theorem, there exist a symplectic form in the reduction  $\mu^{-1}(0)/S^1=\p^4$, and in this case it corresponds to the Fubini-Study form $\omega_{FS}$, see for example \cite[\S 5]{mcduffbook} or \cite[\S 23]{annas}. Furthermore if we denote the reduction by
	\begin{center}
		\begin{tikzpicture}
		\matrix (m) [matrix of math nodes, row sep=1em,
		column sep=2em]{
			\mu^{-1}(0)=S^9 &\C^{5}\setminus\{0\}\\
			\p^4& \\};			
		\path[-stealth]
		(m-1-1) edge [bend right=0] node [above] {$\imath$}  (m-1-2) 
		edge [bend right=0] node [left] {$\pi$} (m-2-1)
		(m-1-2) edge [bend right=0] node [below] {$pr$}  (m-2-1);
		\end{tikzpicture}
	\end{center} 
	the reduced form satisfies $\pi^*\omega_{FS}=\imath^*\omega_{can}$. 
	The canonical form can be written as  $\omega_{can}=\frac{1}{2}\sum_jd|z_j|^2\wedge d\theta_j$. Therefore, for $\epsilon>0$ small enough, the set
	$$T:=\{(z_0, z_1, z_2, z_3, z_4)\in  \C^5\text{ }|\text{ } |z_0|=\epsilon \text{ , }|z_1|=|z_2|=|z_3|=r\text{ , }|z_4|^2=1-\epsilon^2-3r^2\}\subset S^9,$$ is a  Lagrangian submanifold of $(\C^5, \omega_{can}).$ Besides, $\Delta_2$ is the intersection of $ X_{\psi}$ with the projection of $T$ to $\p^4$. Consequently, the tangent space of $\Delta_2$ is contained in the tangent space of $\pi(T)$. Since $0=(\pi^*\omega_{FS})|_{T}=(\omega_{FS})|_{\pi(T)}$, we conclude that $(\omega_{FS})|_{\Delta_2}=0$.
\end{proof}
The cycle  $\Delta_4$ is  associated with the degeneration of the manifold when $\psi$ goes to 1 \cite[\S 3]{candelasdelaossa}. In coordinates can be described as
\begin{equation}
\label{cycletorus}
\Delta_4=\{[1:z_1:z_2:z_3:z_4]\in \p^4\text{ }|\text{ } z_1, z_2, z_3  {\footnotesize \text{ reals and } z_4 \text{ given by equation $p_{\psi}=0$ when $\psi \to 1$}}\}
\end{equation}
where $z_4$ is defined as the branch of of the solution of $p_\psi(z)=0$ which is an $S^3$ when $\psi\to 1$. Follows from the next proposition that $\Delta_4$ is an Lagrangian sphere $S^3$.
\begin{proposition}
	The cycle $\Delta_4$ is a vanishing cycle.
\end{proposition}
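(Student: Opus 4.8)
The plan is to exhibit $\Delta_4$ as the vanishing cycle of the Lefschetz degeneration of the Dwork family $X_\psi$ at $\psi \to 1$, so that Proposition \ref{vanishingthimble} gives the conclusion. First I would recall that at $\psi = 1$ the polynomial $p_1 = z_0^5+z_1^5+z_2^5+z_3^5+z_4^5 - 5z_0z_1z_2z_3z_4$ acquires an isolated ordinary double point: one checks by solving $\nabla p_1 = 0$ that the singular locus (in the relevant affine chart $z_0 = 1$, after accounting for the $G$-action) consists of a single node, where all partial derivatives vanish simultaneously at a point with $z_1=z_2=z_3=z_4$ equal to a common value. The key local computation is to verify that the Hessian of $p_\psi$ at this point is nondegenerate, so that after a holomorphic change of coordinates $p_\psi$ takes the Morse normal form $p_1(p) + w_1^2 + w_2^2 + w_3^2 + w_4^2$ plus the deformation parameter, i.e. the family $\{X_\psi\}$ is a Lefschetz fibration near $\psi = 1$ in the sense defined in Section 2.

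**Next**, I would identify the real locus description (\ref{cycletorus}) of $\Delta_4$ with the standard vanishing sphere of this node. In the Morse chart, choosing $\gamma$ to be the real segment approaching $\psi = 1$, the vanishing cycle is the real $n$-sphere $\{w \in \R^4 : w_1^2 + \cdots + w_4^2 = \text{const}\}$ sitting inside the nearby fiber; I would show that the conditions "$z_1,z_2,z_3$ real and $z_4$ the branch of $p_\psi = 0$ that degenerates to $S^3$ as $\psi \to 1$" pull back, under the coordinate change diagonalizing the Hessian, precisely to this real sphere — the three real parameters $z_1,z_2,z_3$ together with the constrained $z_4$ trace out an $S^3$ collapsing to the node. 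One must also check that $\Delta_4$ avoids the curves $C_{ijk}$ where $G$ acts non-freely (as was checked for $\Delta_2$), so that its image in $\tilde X_\psi$ is again an embedded $3$-sphere and remains the vanishing cycle of the induced Lefschetz degeneration on the desingularized quotient family.

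**The main obstacle** I anticipate is the bookkeeping at the node: verifying that the critical point of $p_1$ is genuinely a single nondegenerate (Morse) critical point rather than a more degenerate or higher-dimensional singular locus, carried out correctly in the weighted/affine chart and compatibly with the projective and quotient structure. In particular one needs the Hessian determinant at the node to be nonzero; this is a concrete but slightly delicate $4\times 4$ symmetric-matrix computation with the $-5z_0z_1z_2z_3z_4$ term contributing the off-diagonal entries. Once the Morse normal form is in hand, matching the explicit real description of $\Delta_4$ to the standard vanishing sphere is essentially the content of the proof of Proposition \ref{vanishingthimble} applied to this chart, and the statement follows. I would also remark that this is exactly the classical picture of the conifold degeneration of the mirror quintic, so the identification is consistent with \cite{candelasdelaossa}.
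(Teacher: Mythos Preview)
Your proposal is correct and follows essentially the same route as the paper: work in the affine chart $z_0=1$, locate the critical points of $p_\psi$ (which the paper writes as $(\xi^{k_1}\psi,\ldots,\xi^{k_4}\psi)$ with $5\mid\sum k_j$, all identified to $(\psi,\ldots,\psi)$ after the $G$-quotient), check nondegeneracy so that the Morse normal form applies, and then read off the vanishing sphere as the real locus $\sum x_j^2=\psi^5-1$ for real $\psi>1$. Your plan is in fact more carefully laid out than the paper's rather terse argument---in particular your intention to verify the Hessian explicitly and to check that $\Delta_4$ avoids the fixed curves $C_{ijk}$ are points the paper either asserts or leaves implicit---but the underlying strategy is the same.
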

\begin{proof}
	 In the chart $z_0=1$ consider the function $f:\C^4\to \C$ given by $f(z_1,\ldots, z_4)=p_{\psi}(1,z_1,\ldots, z_4)$. The critical points of $f$, $(\xi^{k_1} \psi, \xi^{k_2} \psi, \xi^{k_3} \psi, \xi^{k_4} \psi)$ where $\xi=e^{\frac{2\pi i}{5}}$, $k_i=1,\ldots, 5$ and $5|\sum_{j=1}^4 k_j$  are non degenerated. After doing the quotient by the finite group $G$, these critical points are identified with $(\psi,\ldots, \psi)$.
	 
	 For real $\psi>1$ close enough to 1  and by taking $z_j=x_j+iy_j$  we have that the map can be locally  defined as $$f(z_1,...,z_4)=(1-\psi^5)+\sum x_j^2-\sum y_j^2+2i\prod x_jy_j,$$
	 and so the vanishing cycle $\delta_\gamma$  in Proposition \ref{vanishingthimble} is the sphere $(\psi^5-1)=\sum x_j^2$.
\end{proof}
 Let  $\delta_1, \delta_2, \delta_3, \delta_4$ be the basis  on which the matrices $S_1$ and $S_\infty$ are written.
 Consider  the isomorphism between $span\{\delta_i\}_{i=1}^4$ and $\R^4$ with the canonical basis,  given by $\sum_{i=1}^4n_i\delta_i \to (n_1,n_2,n_3,n_4)$. Thus, the monodromy acting on a vector $\delta=\sum_{i=1}^4 n_i \delta_i$ corresponds to 
  \begin{equation*}
  S_j(\delta)=(n_1\hspace{1mm}n_2\hspace{1mm}n_3\hspace{1mm}n_4)S_j\begin{pmatrix}
  \delta_1\\
  \delta_2\\
  \delta_3\\
  \delta_4
  \end{pmatrix}\hspace{4mm}\text{ with }j=1,\infty.
\end{equation*}
  From \cite{candelasdelaossa} and Picard-Lefschetz formula, we know that  
 the monodromy matrices satisfy $S_\infty \Delta_2=\Delta_2$ and $S_1\Delta_2=\Delta_2+\Delta_4$. Therefore $\Delta_2=\delta_2 \equiv [0 \text{ }1\text{ }0\text{ }0]$ and  $\Delta_4=\delta_4 \equiv [0 \text{ }0\text{ }0\text{ }1]$.
\section{Orbits for $\delta_2$ and $\delta_4$}
 Let $H$ be the subgroup of $Sp(4)$, generated by $M_0$ and $M_1$. Moreover, the vectors  $\delta_2=(0 \text{ }1\text{ }0\text{ }0)$ and  $\delta_4=(0 \text{ }0\text{ }0\text{ }1)$ are invariants by the change of basis $M$ defined in (\ref{changeofbasis}). In this section we compute the orbit of $\delta_2$ and $\delta_4$ by the action of $H$ in $\Z_p$, for some prime numbers $p$.\\

For the mirror quintic $\tilde X$, any element in $H_3(\tilde X,\Z)$ which is in  the orbit $H\cdot \delta_4$  is a homology class  supported in a Lagrangian 3-sphere, and any element in the orbit $H\cdot \delta_2$ a homology class supported in a Lagrangian 3-torus.  So far we have not computed the orbits in $\Z$. However, considering $H_3(\tilde X,\Z_p)$ for some primes $p$, it is possible to compute the orbits. The next lemma helps us to reduce the possible words appearing in $H$ mod $p\Z$.
\begin{lemma}
	\begin{align*}
	\mod_p(M_0^{p})=Id_4,&\hspace{2mm}p\neq 2,3, \hspace{2mm} mod_2(M_0^4)=Id_4, mod_3(M_0^9)=Id_4,  \\
	&\mod_p(M_1^p)=Id_4\hspace{4mm}\forall \text{ prime }p.
	\end{align*}
\end{lemma}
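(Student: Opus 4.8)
\section*{Proof plan}

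The plan is to use that $M_0$ and $M_1$ are unipotent, which reduces the claim to divisibility properties of binomial coefficients.

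First I would set $N_0:=M_0-Id_4$ and $N_1:=M_1-Id_4$ and check, by a short explicit computation, that $N_0$ is nilpotent with $N_0^{4}=0$ (in fact $N_0^{3}\neq 0$, consistent with $\varphi=0$ being a maximally unipotent degeneration, so that $M_0$ consists of a single Jordan block of size $4$), while $N_1$ has a single nonzero entry and $N_1^{2}=0$. Granting this, the binomial theorem gives, for every positive integer $m$,
$$M_0^{m}=(Id_4+N_0)^{m}=Id_4+m\,N_0+\binom{m}{2}N_0^{2}+\binom{m}{3}N_0^{3},\qquad M_1^{m}=Id_4+m\,N_1.$$

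The statement for $M_1$ is then immediate, since $M_1^{p}=Id_4+p\,N_1\equiv Id_4\pmod p$ for every prime $p$ (indeed for every integer). For $M_0$ it suffices that $p$ divides $\binom{m}{1},\binom{m}{2},\binom{m}{3}$ simultaneously. When $p\ge 5$ I would take $m=p$: for $0<j<p$ the prime $p$ divides $\binom{p}{j}$, because $p$ divides the numerator $p!$ but is coprime to $j!\,(p-j)!$; hence $M_0^{p}\equiv Id_4\pmod p$. For $p=2$ take $m=4$ and note $\binom{4}{1},\binom{4}{2},\binom{4}{3}=4,6,4$ are all even; for $p=3$ take $m=9$ and note $\binom{9}{1},\binom{9}{2},\binom{9}{3}=9,36,84$ are all divisible by $3$. (Uniformly, the least exponent that works modulo $p$ is $p^{\lceil\log_p 4\rceil}$, which equals $p$ for $p\ge 5$, $4$ for $p=2$, and $9$ for $p=3$.)

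There is no genuine obstacle here: the lemma is computational. The only steps needing care are the two nilpotency checks $N_0^{4}=0$ and $N_1^{2}=0$, carried out by a couple of explicit matrix products, and the observation that $p=2,3$ must be treated separately not for any arithmetic reason involving the matrix entries, but simply because $\binom{p}{2}$ and $\binom{p}{3}$ fail to be divisible by $p$ when $p\le 3$, which forces one to pass to a higher power of $M_0$.
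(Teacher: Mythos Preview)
Your argument is correct. Both proofs are short and computational, but you take a somewhat different route than the paper. The paper computes the closed form
\[
M_0^{m}=\begin{pmatrix}1&m&0&0\\0&1&0&0\\dm&a_m&1&0\\b_m&c_m&-m&1\end{pmatrix},\qquad a_m=\tfrac{d}{2}m(m+1),\ b_m=\tfrac{d}{2}m(1-m),\ c_m=\tfrac{d}{6}m(1-m^{2})-km,
\]
and then checks directly that $p\mid a_p,\,b_p,\,c_p$ for primes $p\neq 2,3$ using $2\mid(p\pm 1)$ and $6\mid(1-p^{2})$; the exceptional cases $m=4,\,p=2$ and $m=9,\,p=3$ are implicit in the formula but not spelled out. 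You instead observe $M_0=Id_4+N_0$ with $N_0^{4}=0$, expand via the binomial theorem, and reduce everything to the divisibility of $\binom{m}{1},\binom{m}{2},\binom{m}{3}$ by $p$. Your packaging is a bit more structural (it works uniformly for any unipotent matrix with a size-$4$ Jordan block, and makes transparent why $p=2,3$ need the larger exponents $p^{\lceil\log_p 4\rceil}$), while the paper's explicit entries have the advantage of displaying exactly how $d$ and $k$ enter. The two are equivalent: expanding your binomial sum recovers precisely the paper's $a_m,b_m,c_m$.
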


\begin{proof}
	Computing the power of theses matrix, we have
	$$M_0^m=\begin{pmatrix}
	1&m&0&0\\
	0&1&0&0\\
	dm&a_m&1&0\\
	b_m&c_m&-m&1
	\end{pmatrix} \text{ and }M_1^m=\begin{pmatrix}
	1&0&0&0\\
	0&1&0&m\\
	0&0&1&0\\
	0&0&0&1
	\end{pmatrix}$$
	where $a_m=\frac{d}{2}m(m+1)\text{ , } b_m=\frac{d}{2}m(1-m)$ and $ c_m=\frac{d}{6}m(1-m^2)-km.$
	Thus, it is enough to show that $p|a_{p}$, $p|b_{p}$ and $p|c_{p}$. However, this is immediate because $2|(p+1)$, $2|(1-p)$ and $6|(1-p^2)$, for $p\neq 2,3$ prime. 
\end{proof} 
Let $v$ be the vector $\delta_2$ (or $\delta_4$) and we denote by $orb_p$ the list of the vector in the orbit of $v$ modulo $p$. Firstly, we start with  $orb_p=\{mod_p(v)\}$ and we compute the  vectors     $\mod_p(wM_1^j M_0^i) \text{  for } j=0,\ldots ,p\text{ , }i=0,\ldots p\text{ and }w\in orb_p$. If these vectors are not in $orb_p$, we add them to $orb_p$. This step is repeated with the new $orb_p$, if there are not new vectors then the orbit is complete. This process is finite because we have at most $p^4$ different vectors in $(\Z/p\Z)^4$.  We summarize the algorithm  \footnote{We have written a MATLAB code for the computation. It is available in  https://github.com/danfelmath/mirrorquintic.git}  to compute the orbit of $v$ modulo $p$  as follows,
 
\begin{tiny}
\begin{algorithm}[H]
	\SetAlgoLined
	\KwIn{$v$, $M_0$, $M_1$, $p$}
	\KwOut{$Orb_p$}
	$Orb_p:=mod_p(v)$
	
	$norm:=1$
	
	\While{$norm>0$}{
		$W:=Orb_p$;
		$L:=length(Orb_p)$
		
		$l:=1$; $c:=1$
		
		\While{$l\leq L$}{
			
			j:=0
			
			\While{$j\leq p$}{
				
				$i:=0$
				
				\While{$i\leq p$}{
					$v_{aux}:=mod_p(W(l)*(M_1^jM_0^i))$
					
					\eIf{$v_{aux}\not\in Orb_p$}{
						
						$Orb_p(L+c):=v_{aux}$
						
						$c:=c+1$; $i:=i+1$
					}
					{$i:=i+1$}
				}
				$j:j+1$
			}
			$l:=l+1$
		}
		$norm:=length(Orb_p)-length(W)$
	}
\end{algorithm}
\end{tiny}

\begin{proof}[Proof of theorem \ref{main2}]
	\label{orbita5}
	Consider the free group $H$ when $d=k=5$. Given $p$, we denote the orbit of $\delta_2$ and $\delta_4$ modulo $p$ as $orb_p(\delta_2)$ and  $orb_p(\delta_4)$, respectively. By using the previous algorithm we have,
	\begin{eqnarray*}
		orb_2(\delta_2)=\{ 
	(0\text{ }0\text{ }1\text{ }1),\text{ } 	
	(0\text{ }1\text{ }0\text{ } 0),\text{ }
	(0\text{ }1\text{ }0\text{ }1),\text{ }
	(1\text{ }     0\text{ }     0\text{ }     1),\text{ }
	(1\text{ }0\text{ }     1\text{ }     1)\}.
	\end{eqnarray*}
	\begin{eqnarray*}
		orb_2(\delta_4)=\{ 
	(0\text{ }0\text{ }0\text{ }1),\text{ } 
	(0\text{ }0\text{ }1\text{ }0),\text{} 	
	(0\text{ }1\text{ }1\text{ }0),\text{}
	(0\text{ }1\text{ }1\text{ }1),\text{}
	(1\text{ }0\text{ }0\text{ }0),\text{}
	 \nonumber\\
	(1\text{ }0\text{ }1\text{ }0),\text{}
	(1\text{ }1\text{ }0\text{ }0),\text{}
	(1\text{ }1\text{ }0\text{ }1),\text{}
	(1\text{ }1\text{ }1\text{ }0),\text{}
	(1\text{ }1\text{ }1\text{ }1)\text{}
	  \}.
	\end{eqnarray*}
	\begin{eqnarray*}
	orb_5(\delta_2)=\{
     (0\text{ }1\text{ }0\text{ }0),\text{}
     (0\text{ }1\text{ }0\text{ }1),\text{}
     (0\text{ }1\text{ }0\text{ }2),\text{}
     (0\text{ }1\text{ }0\text{ }3),\text{}
     (0\text{ }1\text{ }0\text{ }4),\text{}
     	 \nonumber\\
     (0\text{ }1\text{ }1\text{ }0),\text{}
     (0\text{ }1\text{ }1\text{ }1),\text{}
     (0\text{ }1\text{ }1\text{ }2),\text{}
     (0\text{ }1\text{ }1\text{ }3),\text{}
     (0\text{ }1\text{ }1\text{ }4),\text{}
     	 \nonumber\\
     (0\text{ }1\text{ }2\text{ }0),\text{}
     (0\text{ }1\text{ }2\text{ }1),\text{}
     (0\text{ }1\text{ }2\text{ }2),\text{}
     (0\text{ }1\text{ }2\text{ }3),\text{}
     (0\text{ }1\text{ }2\text{ }4),\text{}
     	 \nonumber\\
     (0\text{ }1\text{ }3\text{ }0),\text{}
     (0\text{ }1\text{ }3\text{ }1),\text{}
     (0\text{ }1\text{ }3\text{ }2),\text{}
     (0\text{ }1\text{ }3\text{ }3),\text{}
     (0\text{ }1\text{ }3\text{ }4),\text{}
     	 \nonumber\\
     (0\text{ }1\text{ }4\text{ }0),\text{}
     (0\text{ }1\text{ }4\text{ }1),\text{}
     (0\text{ }1\text{ }4\text{ }2),\text{}
     (0\text{ }1\text{ }4\text{ }3),\text{}
     (0\text{ }1\text{ }4\text{ }4)\}.  
	\end{eqnarray*}
	\begin{eqnarray*}
		orb_5(\delta_4)=\{
	(0 \text{ }0\text{ }0\text{ }1),\text{ }
    (0 \text{ }0\text{ }1\text{ }1),\text{ }
    (0 \text{ }0\text{ }2\text{ }1),\text{ }
    (0 \text{ }0\text{ }3\text{ }1),\text{ }
    (0 \text{ }0\text{ }4\text{ }1)
\}.
	\end{eqnarray*}
	$$orb_p(\delta_2)=orb_p(\delta_4)=(\Z/p\Z)^4\setminus{(0\text{ }0\text{ }0\text{ }0}), \text{ for }p=3,7,11,13,17,19,23. $$
\end{proof}
From the map $H_3(\tilde X, \Z)\xto{mod_p} H_3(\tilde X, \Z_p)$  we have that if $\delta\in H_3(\tilde X, \Z)$ is a primitive class and it is  in the orbit of $\delta_2$ (or $\delta_4$), then  $mod_p(\delta)\in orb_p(\delta_2)$ (or $mod_p(\delta_4)\in orb_p (\delta_4)$) for all $p$.  We think that the converse should be true; that is the Conjecture  \ref{conjecture1}.\\

For the other examples of quintic threefolds appearing in Table \ref{14examplevalues}, we have analogous results. However,  in this case we do not know if the vectors $\delta_2=(0\text{ } 1\text{ } 0\text{ } 0)$ and $\delta_4=(0\text{ } 0\text{ } 0 \text{ }1)$ are really supported in a Lagrangian submanifold. In  Table \ref{orbituv} we present the orbits of the vectors $\delta_2$ and $\delta_4$ modulo $p$ for the fourteen cases of $(d,k )$. If the orbit is $(\Z/p\Z)^4\setminus{(0\text{ }0\text{ }0\text{ }0})$ we call it \textit{complete}. The orbits for the vector $\delta_2$ are  presented in  Table \ref{orbitu} and the orbits for the vector $\delta_4$  are presented in  Table \ref{orbitv}.

\begin{table}[htbp]
	\scriptsize
	\centering
	\begin{tabular}{|l|l|l|}
		\hline
		$(d,k)$ & Prime & Orbit\\
		\hline
		\hline
		$(5,5)$&$p=5$ & 
		$
		(0  0 0 1),
		(0  0 1 1), 
		(0  0 2 1), 
		(0  0 3 1), 
		(0  0 4 1),
		(0 1 0 0),
		(0 1 0 1),
		(0 1 0 2),
		(0 1 0 3),
		(0 1 0 4),$\\&&$
		(0 1 1 0),
		(0 1 1 1),
		(0 1 1 2),
		(0 1 1 3),
		(0 1 1 4),
		(0 1 2 0),
		(0 1 2 1),
		(0 1 2 2),
		(0 1 2 3),
		(0 1 2 4),$\\&&$
        (0 1 3 0),
        (0 1 3 1),
        (0 1 3 2), 
        (0 1 3 3),
        (0 1 3 4),
        (0 1 4 0),
        (0 1 4 1),
        (0 1 4 2),
        (0 1 4 3),
        (0 1 4 4)		
		$
		
		\\
		\hline
		
		&$p=2,3,7,11,13,17,19,23$ & Complete\\
		
		\hline
		$(2,4)$&$p=2$ & 
		$
	     (0     0     0     1),
	     (0     0     1     1),
	     (0     1     0     0),
	     (0     1     0     1),
	     (0     1     1     0),
	     (0     1     1     1)$\\
		\hline
		&$p=3,5,7,11,13,17,19,23$& Complete\\
		\hline
		
		$(1,4)$	
		&$p=2,3,5,7,11,13,17,19,23$& Complete\\
		\hline

		$(16,8)$&$p=2$ & 
	   $(0     0     0     1),
		(0     0     1     1),
		(0     1     0     0),
		(0     1     0     1),
		(0     1     1     0),
    	(0     1     1     1)$\\
		\hline
		&$p=3,5,7,11,13,17,19,23$& Complete\\
		\hline

		$(12,7)$&$p=2$&	   
	   $(0     0     0     1),
		(0     0     1     1),
		(0     1     0     0),
		(0     1     0     1),
		(0     1     1     0),
		(0     1     1     1)$\\
		\hline
		&$p=3$& 
		$ 
		     (0     0     0     1),
		     (0     0     0     2),
		     (0     0     2     1),
		     (0     0     2     2),
		     (0     1     0     0),
		     (0     1     0     1),
		     (0     1     0     2),
		     (0     1     1     0),$\\&&$
		     (0     1     1     1),
		     (0     1     1     2),
		     (0     2     1     0),
		     (0     2     1     1),
		     (0     2     1     2),
		     (0     2     2     0),
		     (0     2     2     1),
		     (0     2     2     2)
		$\\
		\hline
		&$p=5,7,11,13,17,19,23$& Complete\\
		\hline

		$(8, 6)$&$p=2$ & $
		(0     0     0     1),
		(0     0     1     1),
		(0     1     0     0),
		(0     1     0     1),
		(0     1     1     0),
		(0     1     1     1)$\\
		\hline
		&$p=3,5,7,11,13,17,19,23$& Complete\\
		\hline	
		$(4,5)$&$p=2$ & $
		(0     0     0     1),
		(0     0     1     1),
		(0     1     0     0),
		(0     1     0     1),
		(0     1     1     0),
		(0     1     1     1)$\\
		\hline
		&$p=3,5,7,11,13,17,19,23$& Complete\\
		\hline	
		$(2,3)$&$p=2$ & $
		(0     0     0     1),
		(0     0     1     1),
		(0     1     0     0),
		(0     1     0     1),
		(0     1     1     0),
		(0     1     1     1)$\\
		\hline
		&$p=3,5,7,11,13,17,19,23$& Complete\\
		\hline		
		$(1,2)$ 
		&$p=2,3,5,7,11,13,17,19,23$& Complete\\
		\hline	
		$(6,5)$&$p=2$ & $
		(0     0     0     1),
		(0     0     1     1),
		(0     1     0     0),
		(0     1     0     1),
		(0     1     1     0),
		(0     1     1     1)$\\
		\hline
		&$p=3$& 
		$
      (0     0     0     1),
      (0     0     0     2),
      (0     0     1     1),
      (0     0     1     2),
      (0     1     0     0),
      (0     1     0     1),
      (0     1     0     2),
      (0     1     2     0),$\\&&$
      (0     1     2     1),
      (0     1     2     2),
      (0     2     1     0),
      (0     2     1     1),
      (0     2     1     2),
      (0     2     2     0),
      (0     2     2     1),
      (0     2     2     2)
		$\\
		\hline
		&$p=5,7,11,13,17,19,23$& Complete\\
		\hline		
		$(3,4)$&$p=3$& 		$ 
		(0     0     0     1),
		(0     0     0     2),
		(0     0     2     1),
		(0     0     2     2),
		(0     1     0     0),
		(0     1     0     1),
		(0     1     0     2),
		(0     1     1     0),$\\&&$
		(0     1     1     1),
		(0     1     1     2),
		(0     2     1     0),
		(0     2     1     1),
		(0     2     1     2),
		(0     2     2     0),
		(0     2     2     1),
		(0     2     2     2)
		$\\
		\hline
		&$p=2,5,7,11,13,17,19,23$& Complete\\
		\hline
		$(1,3)$
		&$p=2,3,5,7,11,13,17,19,23$& Complete\\
		\hline
		
		$(4,4)$&$p=2$ & $	
		(0     0     0     1),
		(0     0     1     1),
		(0     1     0     0),
		(0     1     0     1),
		(0     1     1     0),
		(0     1     1     1)$\\
		\hline
		&$p=3,5,7,11,13,17,19,23$& Complete\\
		\hline

		$(9,6)$&$p=3$& $
          (0     0     0     1),
          (0     0     1     1),
          (0     0     2     1),
          (0     1     0     0),
          (0     1     0     1),
          (0     1     0     2),
          (0     1     1     0),
          (0     1     1     1),$\\&&$
          (0     1     1     2),
          (0     1     2     0),
          (0     1     2     1),
          (0     1     2     2)$\\
		\hline
		&$p=2,5,7,11,13,17,19,23$& Complete\\
		\hline
	\end{tabular}
	\caption{Orbit of  vectors $\delta_2$ and $\delta_4$  by the monodromy action for the fourteen mirror Calabi-Yau threefolds.}
	\label{orbituv}
\end{table}

\begin{table}[htbp]
	\scriptsize
	\centering
	\begin{tabular}{|l|l|l|}
		\hline
		$(d,k)$ & Prime & Orbit\\
		\hline
		\hline
		$(5,5)$&$p=2$ & 
		$
     (0     0     1     1),
     (0     1     0     0),
     (0     1     0     1),
     (1     0     0     1),
     (1     0     1     1)$\\
		\hline
		&$p=5$ &
		$
     (0     1     0     0),
     (0     1     0     1),
     (0     1     0     2),
     (0     1     0     3),
     (0     1     0     4),
     (0     1     1     0),
     (0     1     1     1),
     (0     1     1     2),
     (0     1     1     3),
     (0     1     1     4),$\\&&$
     (0     1     2     0),
     (0     1     2     1),
     (0     1     2     2),
     (0     1     2     3),
     (0     1     2     4),
     (0     1     3     0),
     (0     1     3     1),
     (0     1     3     2),
     (0     1     3     3),
     (0     1     3     4),$\\&&$
     (0     1     4     0),
     (0     1     4     1),
     (0     1     4     2),
     (0     1     4     3),
     (0     1     4     4)$\\
		\hline	
		&$p=3,7,11,13,17,19,23$ & Complete\\
		\hline
		$(2,4)$&$p=2$ & $
	     (0     1     0     0),
	     (0     1     0     1)
	     (0     1     1     0),
	     (0     1     1     1)$\\
		\hline
		&$p=3,5,7,11,13,17,19,23$ & Complete\\
		\hline
		$(1,4)$&$p=2$&   
		$
     (0     0     1     0),
     (0     1     0     0),
     (0     1     0     1),
     (0     1     1     0),
     (0     1     1     1),
     (1     0     0     1),
     (1     0     1     0),
     (1     1     1     0),
     (1     1     1     1)$\\
		\hline
		&$p=3,5,7,11,13,17,19,23$ & Complete\\
		\hline	
		$(16,8)$&$p=2$ & $  
      (0     1     0     0),
      (0     1     0     1),
      (0     1     1     0),
      (0     1     1     1)$\\
		\hline
		&$p=3,5,7,11,13,17,19,23$ & Complete\\
		\hline	
		$(12,7)$&$p=2$&$ 
       (0     0     1     1),
       (0     1     0     0),
       (0     1     0     1)
$
		\\
		\hline
		&$p=3$& $   
     (0     0     2     1),
     (0     0     2     2),
     (0     1     0     0),
     (0     1     0     1),
     (0     1     0     2),
     (0     2     1     0),
     (0     2     1     1),
     (0     2     1     2)
$\\
		
		\hline
		&$p=5,7,11,13,17,19,23$ & Complete\\
		\hline			
		$(8, 6)$&$p=2$ & $
     (0     1     0     0),
     (0     1     0     1),
     (0     1     1     0),
     (0     1     1     1)
$\\
		\hline
		&$p=3,5,7,11,13,17,19,23$ & Complete\\
		\hline		
		$(4,5)$&$p=2$ & $ 
        (0     0     1     1),
        (0     1     0     0),
        (0     1     0     1)
$\\
		\hline
		&$p=3,5,7,11,13,17,19,23$ & Complete\\
		\hline		
		$(2,3)$&$p=2$ & $   
      (0     0     1     1),
      (0     1     0     0),
      (0     1     0     1)$\\
		\hline
		&$p=3,5,7,11,13,17,19,23$ & Complete\\
		\hline	
		
		$(1,2)$&$p=2$&$
       (0     0     1     0),
       (0     1     0     0),
       (0     1     0     1),
       (0     1     1     0),
       (0     1     1     1),
       (1     0     0     1),
       (1     0     1     0),
       (1     1     1     0),
       (1     1     1     1)$\\
		\hline
		&$p=3,5,7,11,13,17,19,23$ & Complete\\
		\hline			
		$(6,5)$&$p=2$ & $  
       (0     0     1     1),
       (0     1     0     0),
       (0     1     0     1)$\\
		\hline
		&$p=3$& 
		$
	 (0     0     1     1),
	 (0     0     1     2),
	 (0     1     0     0),
	 (0     1     0     1),
	 (0     1     0     2),
	 (0     2     2     0),
	 (0     2     2     1),
	 (0     2     2     2)$\\
		\hline
		&$p=5,7,11,13,17,19,23$ & Complete\\
		\hline	
		
		$(3,4)$&$p=2$& $
          (0     0     1     0),
          (0     1     0     0),
          (0     1     0     1),
          (0     1     1     0),
          (0     1     1     1),
          (1     0     0     1),
          (1     0     1     0),
          (1     1     1     0),
          (1     1     1     1)$\\
		\hline
		&$p=3$&$
		(0     0     2     1),
		(0     0     2     2),
		(0     1     0     0),
		(0     1     0     1),
		(0     1     0     2),
		(0     2     1     0),
		(0     2     1     1),
		(0     2     1     2)$\\
		\hline
		&$p=5,7,11,13,17,19,23$ & Complete\\
		\hline	
		
		$(1,3)$&$p=2$&$
         (0     0     1     1),
         (0     1     0     0),
         (0     1     0     1),
         (1     0     0     1),
         (1     0     1     1)        
$\\
		\hline
		&$p=3,5,7,11,13,17,19,23$ & Complete\\
		\hline	
		
		$(4,4)$&$p=2$ & $
      (0     1     0     0),
      (0     1     0     1),
      (0     1     1     0),
      (0     1     1     1)$\\
		\hline
		&$p=3,5,7,11,13,17,19,23$ & Complete\\
		\hline	
		
		$(9,6)$&$p=2$& $
     (0     0     1     0),
     (0     1     0     0),
     (0     1     0     1),
     (0     1     1     0),
     (0     1     1     1),
     (1     0     0     1),
     (1     0     1     0),
     (1     1     1     0),
     (1     1     1     1)$\\
		\hline
		&$p=3$&$
     (0     1     0     0),
     (0     1     0     1),
     (0     1     0     2),
     (0     1     1     0),
     (0     1     1     1),
     (0     1     1     2),
     (0     1     2     0),
     (0     1     2     1),
     (0     1     2     2)$\\
		\hline
		&$p=5,7,11,13,17,19,23$ & Complete\\
		\hline	
	\end{tabular}
	\caption{Orbit of  vector $\delta_2$  by the monodromy action for the fourteen mirror Calabi-Yau threefolds.}
	\label{orbitu}
\end{table}

\begin{table}[htbp]
	\scriptsize
	\centering
	\begin{tabular}{|l|l|l|}
		\hline
		$(d,k)$ & Prime & Orbit\\
		\hline
		\hline
		$(5,5)$&$p=2$ & 
		$
     (0     0     0     1),
     (0     0     1     0),
     (0     1     1     0),
     (0     1     1     1),
     (1     0     0     0),
     (1     0     1     0),
     (1     1     0     0),
     (1     1     0     1),
     (1     1     1     0),
     (1     1     1     1)$\\
		\hline
		&$p=5$ &
		$
		(0     0     0     1),
		(0     0     1     1),
		(0     0     2     1),
		(0     0     3     1),
		(0     0     4     1)$\\
		\hline
		&$p=3,7,11,13,17,19,23$ & Complete\\
		
		\hline
		$(2,4)$&$p=2$ & $ 
     (0     0     0     1),
     (0     0     1     1)$\\
		\hline
		&$p=3,5,7,11,13,17,19,23$ & Complete\\
		\hline
		
		$(1,4)$&$p=2$&   
		$
        (0     0     0     1),
        (0     0     1     1),
        (1     0     0     0),
        (1     0     1     1),
        (1     1     0     0),
        (1     1     0     1)$\\
		\hline
		&$p=3,5,7,11,13,17,19,23$ & Complete\\
		\hline	
		$(16,8)$&$p=2$ & $  
         (0     0     0     1),
         (0     0     1     1)$\\
		\hline
		&$p=3,5,7,11,13,17,19,23$ & Complete\\
		\hline
		$(12,7)$&$p=2$&$ 
     (0     0     0     1),
     (0     1     1     0),
     (0     1     1     1)$
		\\
		\hline
		&$p=3$&$
          (0     0     0     1),
          (0     0     0     2),
          (0     1     1     0),
          (0     1     1     1),
          (0     1     1     2),
          (0     2     2     0),
          (0     2     2     1),
          (0     2     2     2)$\\
		\hline
		&$p=5,7,11,13,17,19,23$ & Complete\\
		\hline	
		$(8, 6)$&$p=2$ & $
		   (0     0     0     1),
		   (0     0     1     1)$\\
		\hline
		&$p=3,5,7,11,13,17,19,23$ & Complete\\
		\hline
		$(4,5)$&$p=2$ & $ 
		   (0     0     0     1),
		   (0     1     1     0),
		   (0     1     1     1)$\\
		\hline
		&$p=3,5,7,11,13,17,19,23$ & Complete\\
		\hline	
		$(2,3)$&$p=2$ & $   
     (0     0     0     1),
     (0     1     1     0),
     (0     1     1     1)$\\
		\hline
		&$p=3,5,7,11,13,17,19,23$ & Complete\\
		\hline	
		$(1,2)$&$p=2$&$
         (0     0     0     1),
         (0     0     1     1),
         (1     0     0     0),
         (1     0     1     1),
         (1     1     0     0),
         (1     1     0     1)$\\
		\hline
		&$p=3,5,7,11,13,17,19,23$ & Complete\\
		\hline
		
		$(6,5)$&$p=2$ & $  
         (0     0     0     1),
         (0     1     1     0),
         (0     1     1     1)
$\\
		\hline
		&$p=3$& 
		$ 
        (0     0     0     1),
        (0     0     0     2),
        (0     1     2     0),
        (0     1     2     1),
        (0     1     2     2),
        (0     2     1     0),
        (0     2     1     1),
        (0     2     1     2)$\\
		\hline
		&$p=5,7,11,13,17,19,23$ & Complete\\
		\hline
		$(3,4)$&$p=2$& $
	     (0     0     0     1),
	     (0     0     1     1),
	     (1     0     0     0),
	     (1     0     1     1),
	     (1     1     0     0),
	     (1     1     0     1)
		$\\
		\hline
		&$p=3$&$
     (0     0     0     1),
     (0     0     0     2),
     (0     1     1     0),
     (0     1     1     1),
     (0     1     1     2),
     (0     2     2     0),
     (0     2     2     1),
     (0     2     2     2)
$\\
		\hline
		&$p=5,7,11,13,17,19,23$ & Complete\\
		\hline	
		$(1,3)$&$p=2$&$
	     (0     0     0     1),
	     (0     0     1     0),
	     (0     1     1     0),
	     (0     1     1     1),
	     (1     0     0     0),
	     (1     0     1     0),
	     (1     1     0     0),
	     (1     1     0     1),
	     (1     1     1     0),
	     (1     1     1     1)
		$\\
		\hline
		&$p=3,5,7,11,13,17,19,23$ & Complete\\
		\hline	
		$(4,4)$&$p=2$ & $
	     (0     0     0     1),
	     (0     0     1     1)
		$\\
		\hline
		&$p=3,5,7,11,13,17,19,23$ & Complete\\
		\hline
		
		$(9,6)$&$p=2$& $
      (0     0     0     1),
      (0     0     1     1),
      (1     0     0     0),
      (1     0     1     1),
      (1     1     0     0),
      (1     1     0     1),
		$\\
		\hline
		&$p=3$&$
       (0     0     0     1),
       (0     0     1     1),
       (0     0     2     1)
		$\\
		\hline
		&$p=5,7,11,13,17,19,23$ & Complete\\
		\hline
		
	\end{tabular}
	\caption{ Orbit of  vector $\delta_4$  by the monodromy action for the fourteen mirror Calabi-Yau threefolds.}
	\label{orbitv}
\end{table}

\clearpage

\begin{footnotesize}
	\bibliographystyle{abbrv}
   \bibliography{References}

\newcommand{\SortNoop}[1]{}
\begin{thebibliography}{10}

\bibitem{ArouxLagrangian}
D.~Auroux, V.~Mu{\~n}oz, and F.~Presas.
\newblock Lagrangian submanifolds and {Lefschetz} pencils.
\newblock {\em Journal of Symplectic Geometry}, 3(2):171--219, 2005.

\bibitem{Arouxlefschetz}
D.~Auroux and I.~Smith.
\newblock Lefschetz pencils, branched covers and symplectic invariants.
\newblock In {\em Symplectic 4-Manifolds and Algebraic Surfaces}, pages 1--53.
  Springer, 2008.

\bibitem{stable}
A.~Banyaga and D.~Hurtubise.
\newblock {\em Lectures on {Morse} Homology}.
\newblock Kluwer Academic Publishers, 2004.

\bibitem{pingpong}
C.~Brav and H.~Thomas.
\newblock Thin monodromy in sp(4).
\newblock {\em Compositio Math}, pages 333--343, 2014.

\bibitem{candelasdelaossa}
P.~Candelas, X.~C. de~la Ossa, P.~S. Green, and L.~Parkes.
\newblock A pair of {Calabi}-{Yau} manifolds as an exactly soluble
  superconformal theory.
\newblock {\em Nuclear Physics B}, 359(1):21--74, 1991.

\bibitem{PFmonodromy}
Y.~Chen, Y.~Yang, N.~Yui, and C.~Erdenberger.
\newblock Monodromy of {Picard}-{Fuchs} differential equations for calabi-yau
  threefolds.
\newblock {\em Journal f{\"u}r die reine und angewandte Mathematik (Crelles
  Journal)}, 2008(616):167--203, 2008.

\bibitem{annas}
A.~C. da~Silva.
\newblock {\em Lectures on Symplectic Geometry}.
\newblock Springer, 2006.

\bibitem{doranmorgan}
C.~Doran and J.~Morgan.
\newblock Mirror symmetry and integral variations of {Hodge} structure
  underlying one parameter families of {Calabi}-{Yau} threefolds.
\newblock {\em V, AMS/IP Studies in Advanced Mathematics}, 38, 2006.

\bibitem{KontsevichHAMS}
M.~Kontsevich.
\newblock Homological algebra of mirror symmetry.
\newblock In {\em Proceedings of the international congress of mathematicians},
  pages 120--139. Springer, 1995.

\bibitem{Lamotke}
K.~Lamotke.
\newblock {The topology of complex projective varieties after {S}.
  {Lefschetz}}.
\newblock {\em Topology}, 20(1):15--51, 1981.

\bibitem{langrangianspheres}
T.~Li and W.~Wu.
\newblock Lagrangian spheres, symplectic surfaces and the symplectic mapping
  class group.
\newblock {\em Geometry \& Topology}, 16(2):1121--1169, 2012.

\bibitem{mcduffbook}
D.~McDuff and S.~Dietmar.
\newblock {\em Introduction to symplectic topology}.
\newblock Oxford University Press, 2017.

\bibitem{MorrisonMirror}
D.~R. Morrison.
\newblock Mirror symmetry and rational curves on quintic threefolds: a guide
  for mathematicians.
\newblock {\em Journal of the American Mathematical Society}, 6(1):223--247,
  1993.

\bibitem{hodgehossein}
H.~Movasati.
\newblock {\em A Course in {Hodge} theory, with emphasis on multiple
  integrals}.
\newblock
  http://w3.impa.br/\textasciitilde{}hossein/myarticles/hodgetheory.pdf, 2017.

\bibitem{movasatigaussconnection}
H.~Movasati.
\newblock {\em Gauss-Manin connection in disguise: {Calabi}-{Yau} modular
  forms}.
\newblock International Press, 2017.

\bibitem{HMSUeda}
Y.~Nohara and K.~Ueda.
\newblock Homological mirror symmetry for the quintic 3--fold.
\newblock {\em Geometry \& Topology}, 16(4):1967--2001, 2012.

\bibitem{minarea}
R.~Schoen and J.~Wolfson.
\newblock Minimizing area among {Lagrangian} surfaces: the mapping problem.
\newblock {\em Journal of Differential Geometry}, 58(1):1--86, 2001.

\bibitem{seidel}
P.~Seidel.
\newblock {\em Fukaya Categories and {Picard}-{Lefschetz} theory}.
\newblock European Mathematical Society, 2008.

\bibitem{RThomas}
R.~P. Thomas.
\newblock An exercise in mirror symmetry.
\newblock In {\em Proceedings of the International Congress of Mathematicians
  2010 (ICM 2010)}, pages 624--651. World Scientific, 2010.

\end{thebibliography}
\end{footnotesize}

\bigskip

\sf{\noindent Daniel L\'opez Garcia\\
	Instituto de Matematica Pura e Aplicada (IMPA),  \\ 
	Estrada Dona Castorina 110, Rio de Janeiro, 22460-320, RJ, Brazil.\\
	daflopez@impa.br}

\end{document}